\documentclass[10pt]{amsart}

\usepackage{amsmath,amssymb,amsthm,mathtools}
\usepackage[colorlinks=true,linkcolor=blue,citecolor=blue,urlcolor=blue]{hyperref}
\usepackage{microtype}

\newtheorem{theorem}{Theorem}[section]
\newtheorem{proposition}[theorem]{Proposition}
\newtheorem{lemma}[theorem]{Lemma}
\newtheorem{corollary}[theorem]{Corollary}
\newtheorem{claim}[theorem]{Claim}
\theoremstyle{remark}
\newtheorem{remark}[theorem]{Remark}

\newcommand{\R}{\mathbb{R}}
\newcommand{\Q}{\mathbb{Q}}
\newcommand{\Z}{\mathbb{Z}}
\newcommand{\T}{\mathbb{T}}
\newcommand{\PP}{\mathbb{P}}
\newcommand{\EE}{\mathbb{E}}
\newcommand{\rank}{\operatorname{rank}}
\newcommand{\codim}{\operatorname{codim}}
\newcommand{\Span}{\operatorname{span}}
\newcommand{\dist}{\operatorname{dist}}
\newcommand{\1}{\mathbf{1}}

\title{Exponential Rank Bounds for Random Matrices}
\author{Achintya Raya Polavarapu}
\email{apolavarapu6@gatech.edu}
\address{School of Mathematics, Georgia Institute of Technology, Atlanta GA 30332.}
\date{}
\subjclass[2020]{Primary 60B20; Secondary 15B52, 60F10}
\keywords{random matrices, corank, singularity, atom bounds, determinant anticoncentration}

\begin{document}

\begin{abstract}
Fix \(b\in(0,1)\), let \(1\leq k\leq n\), and let \(A=(A_{ij})\) be an
\(n\times n\) random matrix with independent real entries satisfying
\[
   \sup_{x\in\R}\PP\{A_{ij}=x\}\leq b<1
   \qquad (1\leq i,j\leq n).
\]
We show that there exists \(c>0\) such that
\[
   \PP\{\rank A\leq n-k\}\leq \exp(-cnk),\qquad 1\leq k\leq n.
\]
\end{abstract}

\maketitle
\vspace{-0.9\baselineskip}

\section{Introduction}

\noindent
The singularity problem for random matrices begins with Koml\'os
\cite{Komlos1967}, who showed that an \(n\times n\) Bernoulli matrix is
nonsingular with probability tending to \(1\).  Kahn--Koml\'os--Szemer\'edi
\cite{KKS1995} later proved the first exponential bound on the singularity
probability.  After successive refinements by Tao--Vu \cite{TV06,TV07} and
Bourgain--Vu--Wood \cite{BVW2010}, Tikhomirov \cite{Tikhomirov2020}
established the sharp Bernoulli asymptotic \((1/2+o(1))^n\).  For general
discrete iid laws with finite support, Jain--Sah--Sawhney \cite{JSS2020}
showed that the singularity probability is asymptotically governed by
repeated rows and repeated columns.  On the least-singular-value side,
Rudelson--Vershynin \cite{RV08,RV09} proved optimal bounds in the
rectangular and square subgaussian settings, and Tao--Vu \cite{TV10a}
proved universality for the rescaled least singular value.

\medskip\noindent
Substantial progress has also been made in the inhomogeneous setting, where
the entries are independent but not necessarily identically distributed.
Livshyts--Tikhomirov--Vershynin \cite{LTV2021} proved an exponential
singularity estimate under second-moment and L\'evy-type assumptions, via
least-singular-value bounds.  Under the weaker hypothesis that no entry
places mass exceeding \(1-\varepsilon\) on any point,
Hunter--Kwan--Sauermann \cite[Theorem~1.2]{HKS2025perm} proved exponential
anticoncentration for the permanent, and observed that their argument
applies equally to the determinant; in particular, the singularity
probability is exponentially small under this uniform atom bound alone,
with no moment assumptions.

\medskip\noindent
In this paper we study the rank-deficiency probabilities
\[
   \PP\{\rank A\leq n-k\},
   \qquad 1\leq k\leq n.
\]
The case \(k=1\) recovers the singularity problem; for \(k>1\) one asks for
an exact rank deficiency of order \(k\), so that all columns of
\(A\) lie in a common \((n-k)\)-dimensional subspace.

\medskip\noindent
Considerably less is known once \(k\) is allowed to grow.  In the iid
subgaussian setting, Rudelson \cite{Rudelson2023} proved an exponential
large-deviation bound for the rank in the range \(k\leq c\sqrt n\).  For
fixed \(k\) in the Bernoulli model, Jain--Sah--Sawhney \cite{JSS22}
identified the sharp exponential rate, and
Hunter--Kwan--Sauermann--Sawhney \cite{HKSS2025} more recently extended
this to the full range \(1\leq k\leq n\) in the Bernoulli case.

\medskip\noindent
Our main result shows that the same uniform atom bound also yields
exponential rank-deficiency bounds for the full range of \(k\) in the
inhomogeneous setting.

\begin{theorem}[Independent atom-bounded entries]\label{thm:main-atom}
Fix \(b\in(0,1)\).  For each \(n\ge1\), let \(A=(A_{ij})\) be an
\(n\times n\) random matrix with independent real entries such that
\[
   \sup_{x\in\R}\PP\{A_{ij}=x\}\leq b<1
   \qquad (1\leq i,j\leq n).
\]
Then there exists \(c_{1.1}=c_{1.1}(b)>0\) such that for all \(n\ge1\) and
all \(1\leq k\leq n\),
\begin{equation}\label{eq:atom-large-corank}
   \PP\{\rank A\leq n-k\}\leq \exp(-c_{1.1}nk).
\end{equation}
\end{theorem}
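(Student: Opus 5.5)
We prove \eqref{eq:atom-large-corank} by splitting into the regimes $k\le \delta n$ and $k>\delta n$, where $\delta=\delta(b)>0$ is a small constant chosen below. The input is the exponential singularity bound under the atom hypothesis alone, as observed by Hunter--Kwan--Sauermann \cite{HKS2025perm}. Rescaled to our needs, it says: for any $m\times m$ matrix $B$ with independent entries whose atoms are at most $b$, and any deterministic shift $M$,
\[
\PP\{\det(B+M)=0\}\le e^{-c_0 m}.
\]
Their determinant anticoncentration is uniform over the polynomial's lower-order terms, so deterministic shifts are allowed.

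\textbf{Large corank, $k>\delta n$.} Suppose $\rank A\le n-k$. Then every $k'\times k'$ minor vanishes for $k' = n-k+1$, and in particular $A$ has many vanishing square submatrices. Pick disjoint row blocks $R_1,\dots,R_s$ and column blocks $C_1,\dots,C_s$, each of size $m=n-k+1$, with $s\approx n/m$ when $m\le n/2$. The diagonal blocks $A[R_t,C_t]$ are independent, and each must be singular, so
\[
\PP\{\rank A\le n-k\}\le e^{-c_0 m s}\approx e^{-c_0 n}.
\]
This is only $e^{-cn}$, not $e^{-cnk}$, so the argument has to be sharpened.

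The better route is column exposure. Let $V_j$ be the span of the first $j-1$ columns. If $\rank A\le n-k$, then at least $k$ columns satisfy $A_{\cdot j}\in V_j$. Conditionally on the previous columns, $V_j$ is a fixed subspace of codimension $d\ge 1$, and the event $A_{\cdot j}\in V_j$ has probability at most $b^{d}$ up to combinatorics. To see this, choose $d$ coordinates complementing a basis of $V_j$. The column's entries in those coordinates are then determined by its other entries, and by independence and the atom bound each such coordinate costs a factor $b$.

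Fix the set $J$ of "bad" columns, with $|J|=k$; there are $\binom nk\le e^{k\log(en/k)}$ choices. The codimension when the $i$-th bad column arrives is at least $d_i\ge k-i+1$ in the worst ordering. The reason is that the final span has dimension at most $n-k$ and each good column raises the dimension by one, so before any bad column the codimension is at least the number of bad columns still to come.

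Summing gives
\[
\PP\le \binom nk\, b^{\sum_i (k-i+1)}=\binom nk\, b^{k(k+1)/2}.
\]
This yields $e^{-ck^2}$, which is $e^{-cnk}$ only when $k\ge \delta n$ and $\delta$ is large enough that $k^2 c\gg k\log(en/k)$. In the large-corank regime this gives the claim.

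\textbf{Small corank, $k\le\delta n$.} Here we must combine the quadratic gain with the singularity bound. Expose the first $n-2k$ columns and let $W$ be their span. On the event $\rank A\le n-k$, the remaining $2k$ columns, projected onto $W^\perp$, form a matrix of rank at most $(n-k)-\dim W$. There are two cases.

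If $\dim W\le n-3k$, then $\rank A\le n-k$ forces at least $k$ dependent columns among the first $n-2k$. We handle this by induction on the structure, or by repeating the argument.

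Otherwise $\dim W^\perp\in[2k,3k]$. Choose $2k$ rows $S$ such that the projection onto $W^\perp$ is well-represented by the coordinates in $S$. Conditionally, the submatrix $A[S,\,\text{last }2k]$ plus a deterministic correction must have rank at most $k$. Splitting the rows of $S$ into about $n/(2k)$ disjoint batches, this forces roughly $n/(2k)$ independent $2k\times 2k$ shifted blocks to be rank-deficient by $k$. By the large-corank case applied to each block, every block costs $e^{-c(2k)k}$, for a total of
\[
e^{-ck^2\cdot n/k}=e^{-cnk}.
\]

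\textbf{Main obstacle.} The hardest step is making this projection argument rigorous. The constraints from different row batches are coupled through $W$, so the blocks are not genuinely independent. My first attempt would be to condition on the first $n-2k$ columns and use that the rows of the last $2k$ columns are independent. The event "$A_{\cdot,\text{last}}$ projected to $W^\perp$ has rank $\le k$" then becomes "there is a $k$-dimensional subspace $U\subset\R^{2k}$ containing all projected rows." I would union bound over $U$ via a net or an algebraic count: $U$ is determined by $k$ of the rows, which leaves $\sim n$ independent rows each landing in $U$ with probability $b^{k}$, giving $b^{k(n-2k)}$ up to an $\binom{n}{k}$ factor.
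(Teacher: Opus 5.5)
There is a genuine gap, and it sits in the range where the theorem is actually hard.

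\textbf{The large-corank step is fine.} Your treatment of $k\ge\delta n$ works. In fact every bad column arrives when $\codim V_j\ge n-\rank A\ge k$, so Lemma~\ref{lem:odlyzko} gives $\binom nk b^{k^2}$, and small $n$ is a finite check.

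\textbf{The gap: the range $1\ll k\le\delta n$.} Here column exposure gives only $e^{-ck^2}$ and the singularity bound only $e^{-cn}$. For $k=\sqrt n$ both are far from the required $e^{-cn^{3/2}}$, and your block step does not supply the missing factor.

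Conditional on $W$, the event is that $A_{\mathrm{last}}K\subseteq W$ for some $k$-dimensional $K\subseteq\R^{2k}$. Write $W=\{x:x_S=M_Wx_{S^c}\}$ for a coordinate set $S$ with $|S|=\codim W$. Then this is the \emph{single} event that $A[S,\mathrm{last}]-M_WA[S^c,\mathrm{last}]$ has rank at most $\codim W-k$. For general $W$ this can only be bounded by $e^{-ck^2}$; for $W=\Span(e_{2k+1},\dots,e_n)$ it really has probability of that order. Other disjoint coordinate batches merely re-express the same event. Their ``deterministic corrections'' involve the rows of all the other batches, so there is no product of $n/(2k)$ independent factors.

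Your fallback bounds the wrong event. Low rank of the projection of $A_{\mathrm{last}}$ onto $W^\perp$ constrains linear combinations of all $n$ rows, not the individual rows. Those rows typically span all of $\R^{2k}$, so there is no $k$-dimensional $U$ that each row must hit with probability $b^k$. The case $\dim W\le n-3k$ is likewise deferred to an unspecified induction, which as stated only returns $e^{-c(n-2k)k}$ and does not propagate the constant.

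\textbf{The missing idea.} You need a tradeoff over the random subspace itself. Subspaces that strongly attract columns, such as coordinate-like $W$, are exactly those for which the conditional probability is only $e^{-ck^2}$. The bound $e^{-cnk}$ holds because such subspaces are themselves exponentially unlikely to be spanned by the columns.

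The paper quantifies this as follows:
\begin{itemize}
\item After the Bernoulli decomposition (Lemma~\ref{lem:atom-decomp}), Propositions~\ref{prop:bbrac} and~\ref{prop:RA-column} give $\sup_t\PP\{X_j+t\in V\}\le\gamma^k\PP\{Y_j\in V\}$ for a lazy comparison column $Y_j$. These are proved by Fourier analysis and the doubling Lemma~\ref{lem:hkss-doubling}.
\item The thin/thick split via $e_k(V)$, together with the weighted cancellation of Lemma~\ref{lem:weighted-cancel}, turns this one-column gain into $\gamma^{km}$ with $m=\lceil n/2\rceil$ (Claims~\ref{claim:thin-noniid} and~\ref{claim:thick-noniid}).
\item This covers all $k\ge K(b)$. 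The singularity bound is used only for the boundedly many $k<K(b)$.
\end{itemize}
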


\medskip\noindent
Since any square submatrix \(B\) of \(A\) satisfies \(\rank B\leq \rank A\),
applying Theorem~\ref{thm:main-atom} to a fixed \(d\times d\) square
submatrix yields the following rectangular version.

\begin{corollary}[Rectangular matrices]\label{cor:rectangular}
Fix \(b\in(0,1)\).  Let \(A\) be an \(m\times n\) random matrix with
independent real entries such that
\[
   \sup_{x\in\R}\PP\{A_{ij}=x\}\leq b<1
   \qquad (1\leq i\leq m,\ 1\leq j\leq n),
\]
and set \(d=\min(m,n)\).  Then for all \(1\leq k\leq d\),
\begin{equation}\label{eq:rectangular-corank}
   \PP\{\rank A\leq d-k\}\leq \exp(-c_{1.1}dk).
\end{equation}
\end{corollary}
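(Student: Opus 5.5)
The corollary follows from Theorem~\ref{thm:main-atom} by restricting to a square submatrix. Without loss of generality \(d=\min(m,n)\geq 1\). Let \(B\) be the \(d\times d\) submatrix of \(A\) formed by the first \(d\) rows and first \(d\) columns. Its entries \(B_{ij}=A_{ij}\), \(1\leq i,j\leq d\), are a subfamily of the entries of \(A\). Hence they are independent real random variables, and each satisfies \(\sup_{x\in\R}\PP\{B_{ij}=x\}\leq b\). So \(B\) meets the hypotheses of Theorem~\ref{thm:main-atom} with \(n\) replaced by \(d\).

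Next I would use the deterministic inequality \(\rank B\leq \rank A\). A set of linearly independent columns of \(B\) extends, by appending the remaining row coordinates, to linearly independent columns of \(A\). This follows since a vanishing linear combination of the extended columns restricts to a vanishing combination of the original ones. The inequality gives the event inclusion \(\{\rank A\leq d-k\}\subseteq\{\rank B\leq d-k\}\) for every \(1\leq k\leq d\).

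Finally, Theorem~\ref{thm:main-atom} applied to \(B\), with dimension \(d\) and corank parameter \(k\in[1,d]\), gives
\[
\PP\{\rank A\leq d-k\}\leq \PP\{\rank B\leq d-k\}\leq \exp(-c_{1.1}dk).
\]
The constant \(c_{1.1}=c_{1.1}(b)\) is legitimately the same here. The theorem gives it as depending only on \(b\), uniformly over all dimensions and all distributions of the entries, so it is unchanged when the dimension is \(d\) rather than \(n\) and the entries are those of \(B\).
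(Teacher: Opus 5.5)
Your proposal is correct and follows the paper's argument: restrict to a fixed $d\times d$ submatrix $B$, use $\rank B\leq\rank A$ to get $\{\rank A\leq d-k\}\subseteq\{\rank B\leq d-k\}$, and apply Theorem~\ref{thm:main-atom} in dimension $d$. You also spell out the details the paper leaves implicit, namely why $\rank B\leq\rank A$ holds and why $c_{1.1}$ depends only on $b$.
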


\medskip\noindent
The rest of the paper is organized as follows.
Section~\ref{sec:prelim-overview} collects notation, gives an overview of
the proof, and records several preliminary lemmas.  Section~\ref{sec:bbrac}
establishes the Bernoulli relative anticoncentration estimate with
coordinatewise biases.  Section~\ref{sec:comparison-thin} develops the
comparison argument for the inhomogeneous column laws together with the
elementary-symmetric thin/thick decomposition.
Section~\ref{sec:proof-main} completes the proof of
Theorem~\ref{thm:main-atom}.

\subsection*{Acknowledgements}
The author thanks Galyna Livshyts for her guidance and encouragement, and
Zach Hunter and Matthew Kwan for helpful comments on an earlier draft.

\medskip\noindent
The author used OpenAI's Codex to assist with writing and revising the paper
and discussing proof details. The author takes full
responsibility for the content of the paper.

\section{Preliminaries and proof overview}\label{sec:prelim-overview}

\subsection{Notation}
All vectors are column vectors.  We write \(\T=\R/\Z\), and we use \(\mu\) for the normalized
Lebesgue measure on \(\T\) and, more generally, on \(\T^k\) for every \(k\ge1\).  For \(x\in\T\),
\(\|x\|_{\T}\) denotes the distance from \(x\) to \(0\) in \(\T\).  If
\(x_1,\ldots,x_r\in\R^n\), then \(\Span(x_1,\ldots,x_r)\) denotes their linear
span.  For a linear subspace \(V\subseteq\R^n\) and a vector \(a\in\R^n\), the
translate \(a+V\) is the corresponding affine subspace.  Unless explicitly
stated otherwise, every subspace in the paper is a linear subspace of
\(\R^n\).

\medskip\noindent
For \(0\leq d\leq n\), write
\[
   \mathcal S(n,d)=\{V\subseteq\R^n:\dim V=d\}
\]
for the family of \(d\)-dimensional subspaces of \(\R^n\).  When a measurable
structure on \(\mathcal S(n,d)\) is needed, we identify each subspace
\(V\in\mathcal S(n,d)\) with its orthogonal projection \(P_V\) and use the
induced Borel structure.  We call \(V\subseteq\R^n\) rational if it is
spanned by vectors in \(\Q^n\), or equivalently if
\(V=\ker L\) for some matrix \(L\) with rational entries.  Finally, for a
real random variable
\(\eta\), write
\[
   Q(\eta)=\sup_{u\in\R}\PP\{\eta=u\}.
\]
We say that a family of random variables satisfies a uniform atom bound if
there exists \(b<1\) such that \(Q(\eta)\leq b\) uniformly over the family.

\medskip\noindent
\subsection{Proof Overview}

\medskip\noindent
If \(\rank A\leq n-k\), then all columns of \(A\) lie in a common
\((n-k)\)-dimensional subspace \(V\subseteq\R^n\).  Thus the problem becomes a
question about the probability that many independent random columns all land
in one low-codimension subspace.

\medskip\noindent
The argument starts from the Fourier/doubling and thin/thick method of
Hunter--Kwan--Sauermann--Sawhney \cite{HKSS2025} and adapts it to fully
independent inhomogeneous matrices.  Under the uniform atom bound, each entry
admits a Bernoulli decomposition with a random shift and a random scale.  After
conditioning on this extra randomness, one is reduced to columns built from
independent Bernoulli coordinates whose parameters stay uniformly away from
\(0\) and \(1\).

\medskip\noindent
Proposition~\ref{prop:bbrac} is the main estimate for a single column.  It compares the probability
that such a Bernoulli vector lies in a codimension-\(k\) affine slice with the
corresponding probability for a symmetric three-point comparison law, and this
already yields an exponential loss in the codimension \(k\).

\medskip\noindent
The rest of the proof turns this one-column estimate into a statement about all
columns at once by splitting the possible subspaces according to how
likely they are to capture a column from the comparison distribution.  If that
probability is very small, then independence already makes it
unlikely that every column lands in the same subspace.  The remaining
subspaces are those that the comparison distribution hits with relatively large
probability.  One must then show that there are still few enough such
subspaces to sum over them.  In the iid
Bernoulli setting of \cite{HKSS2025}, this step can be organized using a
single hit probability attached to the subspace.  In the present
inhomogeneous setting, one has to track the corresponding columnwise
probabilities simultaneously.

\medskip\noindent
This argument already yields the theorem for all sufficiently large \(k\) from
the uniform atom bound alone.  To cover the finitely many smaller values of
\(k\), we combine it with the exponential singularity estimate that is
already available under the same hypothesis.

\subsection{Preliminary Results}
\medskip\noindent
We begin by recording the measurability facts that let us treat
random spans and subspace-valued events without further comment in the main
argument.

\begin{lemma}[Measurability in the subspace variable]\label{lem:grassmann-meas}
Fix \(0\leq d\leq n\), and let \(U\) be an \(\R^n\)-valued random vector.  Then
the map
\[
   V\mapsto \PP\{U\in V\},
   \qquad
   V\in\mathcal S(n,d),
\]
is Borel.  More generally, if \(U_1,\ldots,U_r\) are independent random
vectors in \(\R^n\), then
\[
   V\mapsto \PP\{U_1,\ldots,U_r\in V\}
\]
is Borel on \(\mathcal S(n,d)\).
\end{lemma}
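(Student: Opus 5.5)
We identify $\mathcal S(n,d)$ with the set $\mathcal P_d$ of rank-$d$ orthogonal projections, a closed subset of the symmetric $n\times n$ matrices, carrying the subspace Borel structure. The idea is to write $\PP\{U\in V\}$ as the integral of a jointly Borel indicator in $(P,u)$ and then apply Fubini--Tonelli.

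First, the set
\[
   E=\{(P,u)\in\mathcal P_d\times\R^n: Pu=u\}
\]
is closed in $\mathcal P_d\times\R^n$, because $(P,u)\mapsto Pu-u$ is continuous. For $P=P_V$ we have $u\in V$ if and only if $P_Vu=u$. Hence
\[
   \PP\{U\in V\}=\int_{\R^n}\1_E(P_V,u)\,d\nu(u),
\]
where $\nu$ is the law of $U$. Since $\1_E$ is bounded and Borel on the product of the two Polish spaces, Tonelli's theorem (the section-integral form for a finite measure $\nu$) shows that $P\mapsto\int\1_E(P,u)\,d\nu(u)$ is Borel on $\mathcal P_d$. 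This gives the first claim.

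For the second claim, independence gives
\[
   \PP\{U_1,\ldots,U_r\in V\}=\prod_{i=1}^r\PP\{U_i\in V\}.
\]
This is a finite product of Borel functions by the first part, so it is Borel. Alternatively, one can apply the same Tonelli argument directly to the closed set $\{(P,u_1,\ldots,u_r):Pu_i=u_i\ \forall i\}$ under the product law, which does not need independence at all.

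The only step needing any care is the joint measurability of $E$. This is immediate from closedness once subspaces are encoded by their projections, and the paper has fixed exactly this encoding. So no real obstacle is expected. The rest is the standard fact that integrating a jointly measurable bounded function against a fixed probability measure in one variable gives a measurable function of the other.
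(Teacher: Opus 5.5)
Your proposal is correct and follows essentially the same route as the paper. The paper encodes $V$ by $P_V$, notes that $(V,x)\mapsto\dist(x,V)=\|(I-P_V)x\|_2$ is continuous so $\1_{\{x\in V\}}$ is jointly Borel (your closed set $\{Pu=u\}$), integrates against the law of $U$, and handles several vectors via the product indicator, which is your alternative argument; your product-of-marginals route via independence is equally valid.
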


\begin{proof}
For \(x\in\R^n\),
\[
   \dist(x,V)=\|(I-P_V)x\|_2.
\]
Hence \((V,x)\mapsto \dist(x,V)\) is continuous from
\(\mathcal S(n,d)\times\R^n\) to \(\R\).  The indicator
\[
   \1_{\{x\in V\}}=\1_{\{\dist(x,V)=0\}}
\]
is therefore Borel on \(\mathcal S(n,d)\times\R^n\).  Integrating this bounded
measurable function against the law of \(U\) yields the first statement.  The
second follows in the same way by replacing \(\1_{\{x\in V\}}\) with the
product indicator \(\prod_{j=1}^r \1_{\{x_j\in V\}}\).
\end{proof}

\medskip\noindent
We will also need the span map to be measurable.

\begin{lemma}[Measurability of random spans]\label{lem:span-map}
Fix \(1\leq d\leq n\).  Let
\[
   \Omega_d=\{(x_1,\ldots,x_d)\in(\R^n)^d:\dim\Span(x_1,\ldots,x_d)=d\}.
\]
Then the map
\[
   (x_1,\ldots,x_d)\mapsto \Span(x_1,\ldots,x_d),
   \qquad
   (x_1,\ldots,x_d)\in\Omega_d,
\]
is Borel from \(\Omega_d\) to \(\mathcal S(n,d)\).
\end{lemma}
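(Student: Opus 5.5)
The plan is to show that the span map is in fact continuous on \(\Omega_d\) with respect to the topology on \(\mathcal S(n,d)\) induced by \(V\mapsto P_V\); Borel measurability then follows at once. Note first that \(\Omega_d\) is an open subset of \((\R^n)^d\). Indeed, if \(X=[x_1,\ldots,x_d]\) is the \(n\times d\) matrix with the given columns, then \(\Omega_d=\{X:\det(X^{\top}X)\neq 0\}\), and the Gram determinant is a polynomial in the entries. So \(\Omega_d\) carries its natural Borel structure as an open set.

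The key step is an explicit formula for the projection. For \(X\in\Omega_d\), the Gram matrix \(X^{\top}X\) is invertible, and
\[
   P_{\Span(x_1,\ldots,x_d)}=X(X^{\top}X)^{-1}X^{\top}.
\]
To verify this, observe that the right-hand side is symmetric and idempotent. Its range is contained in the column space of \(X\), and it fixes every \(Xv\), so its range equals that column space. Hence it is the orthogonal projection onto \(\Span(x_1,\ldots,x_d)\).

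The entries of \((X^{\top}X)^{-1}\) are given by Cramer's rule as rational functions of the entries of \(X\), with the nonvanishing denominator \(\det(X^{\top}X)\) on \(\Omega_d\). Therefore \(X\mapsto X(X^{\top}X)^{-1}X^{\top}\) is continuous from \(\Omega_d\) into the space of \(n\times n\) matrices. Since \(\mathcal S(n,d)\) is identified with the set \(\{P_V\}\) carrying the subspace topology and induced Borel structure, the span map is continuous into \(\mathcal S(n,d)\). Preimages of Borel sets under a continuous map are Borel, which completes the argument.

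I do not expect a genuine obstacle here. The only point requiring care is to check that the measurable structure on \(\mathcal S(n,d)\) fixed in the Notation subsection is exactly the one induced from matrix space via \(V\mapsto P_V\). Once that is confirmed, continuity of the projection formula settles the claim.
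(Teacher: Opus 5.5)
Your proposal is correct and matches the paper's proof: both identify \(\Span(x_1,\ldots,x_d)\) with the projection \(X(X^{\top}X)^{-1}X^{\top}\) and use its continuity in \(X\) on \(\Omega_d\) to conclude that the span map is continuous, hence Borel. Your extra checks (that \(\Omega_d\) is open via the Gram determinant, and that the formula really is the orthogonal projection) just make explicit details the paper leaves implicit.
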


\begin{proof}
Identify \((x_1,\ldots,x_d)\in\Omega_d\) with the matrix
\[
   A=[x_1\ \cdots\ x_d]\in\R^{n\times d}.
\]
On \(\Omega_d\), the matrix \(A\) has full column rank, and the orthogonal
projection onto \(\Span(x_1,\ldots,x_d)\) is
\[
   P_A=A(A^\top A)^{-1}A^\top.
\]
This depends continuously on \(A\), so the span map is continuous, hence
Borel.
\end{proof}

\medskip\noindent
We next record the elementary codimension bound for product measures that we
use throughout the paper.  It goes back to Odlyzko \cite{Odlyzko1988}.

\begin{lemma}[Odlyzko bound]\label{lem:odlyzko}
Let \(U=(U_1,\ldots,U_n)\) have independent real coordinates.  Suppose that
for some \(\lambda\in(0,1)\),
\[
   \sup_{x\in\R}\PP\{U_i=x\}\leq \lambda
   \qquad\text{for every }i.
\]
Then for every affine subspace \(W\subseteq\R^n\) of codimension \(r\),
\[
   \PP\{U\in W\}\leq \lambda^r.
\]
\end{lemma}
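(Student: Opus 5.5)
The plan is the standard Odlyzko argument: find \(r\) coordinates that determine the remaining ones on \(W\), then condition on everything else. Write \(W=a+V\), where \(V\) is a linear subspace of dimension \(n-r\). Since \(\codim V=r\), we can choose a set \(I\subseteq\{1,\ldots,n\}\) with \(|I|=r\) such that the coordinate projection \(\pi_{I^c}:V\to\R^{I^c}\) is injective. Concretely, take \(I^c\) to index a set of \(n-r\) coordinates on which some basis of \(V\) restricts to linearly independent vectors; this exists because a basis matrix of \(V\) has rank \(n-r\), so some \((n-r)\times(n-r)\) minor is nonzero.

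Injectivity has the following consequence. For every \(y\in\R^{I^c}\), there is at most one \(w\in W\) with \(\pi_{I^c}w=y\). Indeed, two such points differ by an element of \(V\) lying in \(\ker\pi_{I^c}\), which is trivial. So on \(W\), the coordinates in \(I\) are a deterministic function of the coordinates in \(I^c\): there is a map \(f:\R^{I^c}\to\R^{I}\) (defined only where a preimage exists) with
\[
\{U\in W\}\subseteq\{\pi_{I^c}U\in D,\ U_I=f(U_{I^c})\},
\]
where \(D\) is the set of \(y\) for which a preimage exists.

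Now condition on \(U_{I^c}\). By independence, \(U_I=(U_i)_{i\in I}\) is independent of \(U_{I^c}\), and its coordinates are independent of each other. Hence
\[
\PP\{U_I=f(y)\}=\prod_{i\in I}\PP\{U_i=f(y)_i\}\leq\lambda^{r}.
\]
Integrating over the law of \(U_{I^c}\) gives \(\PP\{U\in W\}\leq\lambda^r\).

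The only delicate point is measurability. We use Fubini on the product space and treat the indicator of \(\{U\in W\}\), which is Borel because \(W\) is closed. We do not need to work with \(f\) directly: for fixed \(y\), the section \(\{x_I:(x_I,y)\in W\}\) has at most one point. Its \(\mathcal L(U_I)\)-measure is therefore at most \(\prod_{i\in I}Q(U_i)\leq\lambda^r\), and Fubini then gives the bound. No step is a real obstacle; the choice of \(I\) through a nonvanishing minor is the key step.
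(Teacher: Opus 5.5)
Your proposal is correct and follows essentially the same route as the paper. The paper writes $W=\{x:Ax=b\}$ and picks $r$ linearly independent columns of $A$, which is the dual form of your choice of $n-r$ coordinates on which the projection from $V$ is injective. It then conditions on the remaining $n-r$ coordinates and bounds the probability of hitting the single forced value of the other $r$ coordinates by $\lambda^r$, just as you do.
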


\begin{proof}
Write \(W=\{x\in\R^n:Ax=b\}\), where \(A\) is an \(r\times n\) real matrix of
rank \(r\).  After permuting coordinates we may assume that the first \(r\)
columns of \(A\) are linearly independent.  Conditional on
\((U_{r+1},\ldots,U_n)\), membership in \(W\) forces \((U_1,\ldots,U_r)\) to
equal a single prescribed vector.  By independence, this conditional
probability is at most \(\lambda^r\).  Averaging proves the claim.
\end{proof}

\medskip\noindent
In the thick-subspace argument, a factor \(\bar\rho(V)^m\) appears on both
sides of the comparison.  The next lemma lets us cancel that same weight.

\begin{lemma}[Weighted cancellation]\label{lem:weighted-cancel}
Let \(\Omega\) be a measurable space, let \(\nu\) and \(\sigma\) be finite
measures on \(\Omega\), and let \(w:\Omega\to(0,\infty)\) be bounded and
Borel.  Suppose
that for some \(C>0\),
\[
   \int_A w\,d\nu\leq C\int_A w\,d\sigma
\]
for every Borel set \(A\subseteq\Omega\).  Then
\[
   \nu(A)\leq C\sigma(A)
\]
for every Borel set \(A\subseteq\Omega\).
\end{lemma}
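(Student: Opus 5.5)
The plan is to pass from the hypothesis on sets to an inequality between the measures \(w\,d\nu\) and \(w\,d\sigma\), and then divide out \(w\) by slicing \(\Omega\) into level sets on which \(w\) is almost constant.

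First, on any Borel set \(B\) on which \(w\) stays within a multiplicative factor \(t>1\), dividing by \(w\) costs at most that factor. If \(s\le w\le ts\) on \(B\) for some \(s>0\), then
\[
   s\,\nu(B)\le\int_B w\,d\nu\le C\int_B w\,d\sigma\le C\,ts\,\sigma(B),
\]
so \(\nu(B)\le Ct\,\sigma(B)\). All integrals are finite because \(w\) is bounded and the measures are finite.

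Second, fix \(t>1\) and a Borel set \(A\), and decompose \(A\) into the disjoint Borel pieces
\[
   A_j=A\cap\{t^{j}<w\le t^{j+1}\},\qquad j\in\Z.
\]
Their union is all of \(A\), since \(w\) takes values in \((0,\infty)\). Each \(A_j\) is Borel because \(w\) is Borel, and the hypothesis applies to every \(A_j\). The first step then gives \(\nu(A_j)\le Ct\,\sigma(A_j)\) for every \(j\). Summing over \(j\) by countable additivity of both measures gives
\[
   \nu(A)\le Ct\,\sigma(A).
\]

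Third, let \(t\downarrow1\) to conclude \(\nu(A)\le C\sigma(A)\).

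I do not expect a real obstacle. The only care needed is that \(w>0\) everywhere, so that the level sets cover \(A\), and that \(w\) is bounded, so that every integral is finite.

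An alternative route is via densities. Set \(\nu_w=w\,\nu\) and \(\sigma_w=w\,\sigma\). The hypothesis says \(C\sigma_w-\nu_w\) is a nonnegative measure, so \(\nu_w\le C\sigma_w\) as measures. Integrating the positive Borel function \(1/w\), approximated from below by simple functions, against both sides yields \(\nu\le C\sigma\) on every Borel set. This works too, but it needs monotone convergence, whereas the level-set argument avoids it.
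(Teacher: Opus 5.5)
Your proof is correct, and it takes a different route from the paper's. The paper passes to the weighted measures \(\widetilde\nu=w\,\nu\) and \(\widetilde\sigma=w\,\sigma\). It first shows \(\nu\ll\sigma\) by exhausting \(A\) with the sets \(A\cap\{w\ge 1/m\}\). It then applies the Radon--Nikodym theorem to obtain \(f=d\nu/d\sigma\), and concludes from \(\int_A wf\,d\sigma\le C\int_A w\,d\sigma\) for all \(A\) that \(wf\le Cw\), hence \(f\le C\), \(\sigma\)-a.e.

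You instead cut \(A\) into the geometric level sets \(\{t^j<w\le t^{j+1}\}\). On each of these \(w\) is constant up to a factor \(t\), so the hypothesis gives \(\nu(A_j)\le Ct\,\sigma(A_j)\) directly. Countable additivity and \(t\downarrow1\) then finish the argument.

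Your route is more elementary. It needs no absolute-continuity step and no Radon--Nikodym theorem, so no \(\sigma\)-finiteness. It also never uses the boundedness of \(w\) or the finiteness of \(\nu\): the bound \(w\le t^{j+1}\) on \(A_j\) already makes \(\int_{A_j}w\,d\sigma\) finite whenever \(\sigma(A_j)<\infty\), and the case \(\sigma(A_j)=\infty\) is trivial. So your remark that finiteness of the integrals comes from boundedness of \(w\) is true but not needed. The paper's route gives the slightly more structural statement \(d\nu/d\sigma\le C\) a.e., which is equivalent here and not needed for the thick-subspace application.
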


\begin{proof}
Define finite measures
\[
   \widetilde\nu(A)=\int_A w\,d\nu,
   \qquad
   \widetilde\sigma(A)=\int_A w\,d\sigma.
\]
By assumption, \(\widetilde\nu(A)\leq C\widetilde\sigma(A)\) for every Borel
set \(A\), so \(\widetilde\nu\ll\widetilde\sigma\).  If \(\sigma(A)=0\), then
\(\widetilde\sigma(A)=0\), hence \(\widetilde\nu(A)=0\).  Since \(w>0\) on
\(A\), this forces \(\nu(A)=0\): otherwise
\[
   A=\bigcup_{m=1}^\infty \left(A\cap\left\{w\ge\frac1m\right\}\right)
\]
would imply that \(\nu(A\cap\{w\ge 1/m\})>0\) for some \(m\), and therefore
\[
   \widetilde\nu(A)\ge \frac1m \nu\!\left(A\cap\left\{w\ge\frac1m\right\}\right)>0,
\]
a contradiction.  Thus \(\nu\ll\sigma\).

\medskip\noindent
Let \(f=d\nu/d\sigma\).  Then for every Borel set \(A\),
\[
   \int_A wf\,d\sigma
   =
   \int_A w\,d\nu
   \leq
   C\int_A w\,d\sigma.
\]
Hence \(wf\leq Cw\) \(\sigma\)-almost everywhere, so \(f\leq C\)
\(\sigma\)-almost everywhere because \(w>0\).  Therefore
\[
   \nu(A)=\int_A f\,d\sigma\leq C\sigma(A)
\]
for every Borel set \(A\subseteq\Omega\).
\end{proof}

\medskip\noindent
The Fourier argument produces level sets in \(\T^k\).  The following doubling
lemma from \cite{HKSS2025} is what turns information about those sets into an
exponential gain in codimension.

\begin{lemma}[HKSS doubling lemma]\label{lem:hkss-doubling}
Let \(A_1,\ldots,A_k\subseteq\T\) be closed sets with \(\mu(A_i)\leq 1/2\) for
all \(i\).  If \(S\subseteq A_1\times\cdots\times A_k\subseteq\T^k\) is
closed, then
\[
   \mu(S+S)\ge 2^k\mu(S).
\]
\end{lemma}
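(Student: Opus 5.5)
The plan is to prove a stronger, two-set statement by induction on \(k\):
\[
   \mu(S+T)\ \ge\ 2^k\sqrt{\mu(S)\,\mu(T)}
   \qquad\text{for all closed } S,T\subseteq A_1\times\cdots\times A_k .
\]
Taking \(T=S\) gives Lemma~\ref{lem:hkss-doubling}. The two-set form is needed because the induction step adds \emph{different} fibres of \(S\) and \(T\). If \(S\) or \(T\) is empty, or has measure zero, there is nothing to prove.

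\medskip\noindent
\textbf{The one-dimensional input.} For nonempty compact \(E,F\subseteq\T\) we use the classical Raikov--Macbeath (Kneser) inequality
\[
   \mu(E+F)\ge\min\{1,\mu(E)+\mu(F)\},
\]
which we take as known. If \(E,F\subseteq A_1\), then \(\mu(E)+\mu(F)\le1\), so the minimum is just \(\mu(E)+\mu(F)\). For \(k=1\) this already gives \(\mu(S)+\mu(T)\ge2\sqrt{\mu(S)\mu(T)}\) by AM--GM, which is the base case.

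\medskip\noindent
\textbf{Induction step.} Write points of \(\T^k\) as \((x,y)\) with \(x\in\T\) and \(y\in\T^{k-1}\).
\begin{itemize}
\item Define the fibres \(S_x=\{y:(x,y)\in S\}\) and \(T_x\) similarly. These are closed subsets of \(A_2\times\cdots\times A_k\).
\item Since \(S\) is compact, \(x\mapsto\mu(S_x)\) is upper semicontinuous. Hence \(a=\sup_x\mu(S_x)\) is attained, and similarly for \(b=\sup_x\mu(T_x)\); we may assume \(a,b>0\).
\item For \(\lambda\in(0,1]\), let \(E_\lambda=\{x:\mu(S_x)\ge\lambda a\}\) and \(F_\lambda=\{x:\mu(T_x)\ge\lambda b\}\). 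These are closed, nonempty, and contained in \(A_1\).
\end{itemize}
Now take \(z=x+x'\) with \(x\in E_\lambda\) and \(x'\in F_\lambda\). The fibre of \(S+T\) over \(z\) contains \(S_x+T_{x'}\). The induction hypothesis therefore bounds its measure below by
\[
   2^{k-1}\sqrt{\mu(S_x)\mu(T_{x'})}\ \ge\ 2^{k-1}\lambda\sqrt{ab}.
\]
Consequently
\[
   \{z:\mu((S+T)_z)\ge 2^{k-1}\lambda\sqrt{ab}\}\supseteq E_\lambda+F_\lambda ,
\]
and by the one-dimensional input this set has measure at least \(\mu(E_\lambda)+\mu(F_\lambda)\).

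\medskip\noindent
\textbf{Layer-cake integration.} Apply the layer-cake formula to the fibre function of \(S+T\) (which is compact, so Fubini applies), substituting \(t=2^{k-1}\sqrt{ab}\,\lambda\). Using \(\int_0^1\mu(E_\lambda)\,d\lambda=\mu(S)/a\) and \(\int_0^1\mu(F_\lambda)\,d\lambda=\mu(T)/b\), we get
\[
   \mu(S+T)\ \ge\ 2^{k-1}\sqrt{ab}\int_0^1\bigl(\mu(E_\lambda)+\mu(F_\lambda)\bigr)\,d\lambda
   = 2^{k-1}\sqrt{ab}\Bigl(\frac{\mu(S)}{a}+\frac{\mu(T)}{b}\Bigr).
\]
By AM--GM the right-hand side is at least
\[
   2^{k-1}\sqrt{ab}\cdot 2\sqrt{\frac{\mu(S)\mu(T)}{ab}} \;=\; 2^k\sqrt{\mu(S)\mu(T)},
\]
which closes the induction.

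\medskip\noindent
\textbf{Main obstacle.} The key design choice is the form of the inductive claim. The additive version \(\mu(S+T)\ge2^{k-1}(\mu(S)+\mu(T))\) does \emph{not} survive the layer-cake step when the fibre maxima \(a\ne b\). The geometric-mean form does survive, because normalising the level sets by \(\lambda a\) and \(\lambda b\) makes AM--GM absorb the mismatch between \(a\) and \(b\).

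Two further points need care:
\begin{itemize}
\item The hypothesis \(\mu(A_1)\le1/2\) is exactly what removes the truncation at \(1\) in Kneser's inequality. This uses \(E_\lambda,F_\lambda\subseteq A_1\), which holds because \(S,T\subseteq A_1\times\cdots\), and is why the lemma assumes \(\mu(A_i)\le1/2\).
\item The measurability and closedness checks (upper semicontinuity of fibre measures, compactness of \(E_\lambda+F_\lambda\)) are routine but should be recorded.
\end{itemize}
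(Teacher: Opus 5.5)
Your proof is correct. Note that the paper gives no proof of Lemma~\ref{lem:hkss-doubling}: it imports the statement from \cite{HKSS2025} and applies it only with $S=T$, to the closed set $S_\eta\cap C_I$. So your proposal is a self-contained replacement for a cited black box, not a variant of an argument in the text.

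Modulo the classical Raikov--Macbeath inequality on $\T$, every step holds:
\begin{itemize}
\item Fibres of $S$ over $x\notin A_1$ are empty. Hence $E_\lambda,F_\lambda\subseteq A_1$ once $\lambda a,\lambda b>0$, and this is exactly what removes the truncation at $1$.
\item Upper semicontinuity of $x\mapsto\mu(S_x)$ follows from compactness and outer regularity. So the level sets are closed and nonempty, and $E_\lambda+F_\lambda$ is compact.
\item $S+T$ is compact, so the layer-cake and Fubini step is legitimate.
\item $\int_0^1\mu(E_\lambda)\,d\lambda=\mu(S)/a$ holds because $\mu(S_x)\le a$ everywhere.
\end{itemize}

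Your geometric-mean two-set hypothesis $\mu(S+T)\ge 2^k\sqrt{\mu(S)\mu(T)}$ is the right inductive statement. The fibre step unavoidably pairs distinct fibres $S_x$ and $T_{x'}$. Moreover, the additive alternative is not merely unprovable this way but false for $k\ge2$: take $S=[0,\tfrac12]^2$ and $T=[0,\varepsilon]^2$, so that $\mu(S+T)=(\tfrac12+\varepsilon)^2<\tfrac12+2\varepsilon^2=2(\mu(S)+\mu(T))$.

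Your route gives a transparent tensorization of the one-dimensional Kneser bound and proves a strictly stronger Brunn--Minkowski-type inequality. The citation buys only brevity.
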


\medskip\noindent
To apply the Bernoulli argument to a matrix with only an atom bound, we first
rewrite each entry, after introducing extra randomness, in terms of a
Bernoulli variable whose bias stays away from \(0\) and \(1\).  The next
lemma provides this one-dimensional reduction.  It is a specialization of the
Bernoulli decomposition theorem of Aizenman--Germinet--Klein--Warzel
\cite{AGKW2007}; we include a direct proof because it is short.

\begin{lemma}[Bernoulli decomposition from an atom bound]\label{lem:atom-decomp}
Let \(\xi\) be a real random variable with \(Q(\xi)\leq b<1\), and put
\[
   \rho=\frac{1-b}{2}.
\]
Then there exist \(p\in[\rho,1-\rho]\), measurable functions
\(f,\delta:(0,1)\to\R\) with \(\delta(t)\neq0\) for all \(t\in(0,1)\), and
independent random variables \(T\sim{\rm Unif}(0,1)\) and
\(\varepsilon\sim{\rm Bernoulli}(p)\) such that
\[
   \xi\stackrel d=f(T)+\delta(T)\varepsilon.
\]
\end{lemma}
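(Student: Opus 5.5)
The plan is the standard quantile-coupling argument. Let \(F\) be the distribution function of \(\xi\) and \(F^{-1}(s)=\inf\{x:F(x)\ge s\}\) its generalized inverse, so that \(F^{-1}(U)\stackrel d=\xi\) for \(U\sim{\rm Unif}(0,1)\). The idea is to pair quantiles \(s\) and \(s'>s\) with \(F^{-1}(s)\neq F^{-1}(s')\). We then realize \(U\) by first choosing a pair via \(T\), and then choosing the lower or upper element of the pair via an independent coin \(\varepsilon\).

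Concretely, fix \(p\in[\rho,1-\rho]\), say \(p=1/2\), which lies in that interval since \(\rho\le 1/2\). Define
\[
  L(t)=(1-p)\,t,\qquad H(t)=(1-p)+p\,t ,\qquad t\in(0,1).
\]
If \(T\sim{\rm Unif}(0,1)\) and \(\varepsilon\sim{\rm Bernoulli}(p)\) are independent, then
\[
  U:=(1-\varepsilon)L(T)+\varepsilon H(T)
\]
is uniform on \((0,1)\): conditioned on \(\varepsilon=0\) it is uniform on \((0,1-p)\), and conditioned on \(\varepsilon=1\) it is uniform on \((1-p,1)\). Hence
\[
  F^{-1}(U)=f(T)+\delta(T)\varepsilon
\]
with \(f=F^{-1}\circ L\) and \(\delta=F^{-1}\circ H-F^{-1}\circ L\). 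Both are measurable, because \(F^{-1}\) is monotone.

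The only real issue is the requirement \(\delta(t)\neq0\) for every \(t\). Since \(F^{-1}\) is nondecreasing and \(H(t)-L(t)=1-p+(2p-1)t\), the difference \(\delta(t)\) vanishes exactly when \(F^{-1}\) is constant on \([L(t),H(t)]\). That means \(\xi\) has an atom of mass at least \(H(t)-L(t)\), which for \(p=1/2\) equals \(1/2\). This contradicts \(Q(\xi)\le b\) only if \(b<1/2\). So the naive pairing fails for \(b\ge1/2\), and the main step is to choose the pairing more cleverly.

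The fix I would try is a cyclic shift rather than a fixed-gap pairing. Pair \(s\) with \(s+\theta\) modulo \(1\), with a gap \(\theta\) chosen so that every interval of length \(\theta\) (or \(1-\theta\), cyclically) that might be flat for \(F^{-1}\) has length exceeding \(b\).

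More directly, I would prove the equivalent statement that \(\xi\) is a mixture of two-point laws \(\{x_t,y_t\}\) with \(x_t\ne y_t\) and weights \((1-p,p)\). The procedure is:

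\begin{enumerate}
\item Sort the mass of \(\xi\) in quantile order.
\item Take \(p=\rho\), the smallest permitted bias.
\item Pair the quantile \(s\in(0,1-p)\) with a quantile in the top portion \((1-p,1)\), arranged so that each pair is taken at quantile distance at least \(1-p\).
\end{enumerate}

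Since \(1-p=(1+b)/2>b\), no atom of mass at most \(b\) can cover a quantile interval of that length, so \(F^{-1}\) takes different values at the two paired quantiles.

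The mass bookkeeping must exactly match. That is, the lower quantiles must have total weight \(1-p\), the upper ones weight \(p\), and both pushforwards must be uniform. I would achieve this with the rotation pairing \(L(t)=(1-p)t\) and \(H(t)=1-p+pt\), checking that their separation \(H(t)-L(t)=(1-p)+(2p-1)t\) stays at least \(\min(p,1-p)\)…

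That lower bound \(\min(p,1-p)\) is too weak, so the actual construction will instead pair \(s\) with \(s+(1-p)\) after first reducing to the interval \((0,p)\). I expect this quantile-gap verification to be the main obstacle. Possibly one needs to split the quantile line into \(\lceil 1/p\rceil\) blocks of length \(p\) and pair block \(j\) with block \(j+\lceil(1-p)/p\rceil\) cyclically, so that paired points sit more than \(b\) apart in quantile.
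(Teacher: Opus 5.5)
There is a genuine gap. Your diagnosis is correct: with the cut at a fixed quantile level $1-p$, $\delta(t)=0$ exactly when $F^{-1}$ is constant on $[L(t),H(t)]$. Since $H(t)-L(t)$ can be as small as $\min(p,1-p)$, an atom straddling the level $1-p$ can cause this. But none of the repairs you sketch closes the gap, and the ones based on quantile distance cannot work. Suppose $L(T)$ is uniform on $(0,1-p)$, $H(T)\in(1-p,1)$, and $H(T)-L(T)\ge 1-p$ almost surely. Then $L(T)<p$ almost surely, which is incompatible with uniformity on $(0,1-p)$ once $p<1/2$; this includes $p=\rho$ whenever $b>0$. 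More generally, for $b>1/2$ a quantile $s\in(1-b,b)$ has no partner in $(0,1)$ at distance exceeding $b$. Yet the uniform variable $U$ gives this interval mass $2b-1>0$, so the block/cyclic scheme fails as well. Forcing $\delta\neq0$ through a lower bound on quantile gaps that does not depend on the distribution is a dead end.

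The missing idea is to let $p$ depend on the law of $\xi$ and to place the cut at a value of $F$, so that no flat piece of $F^{-1}$ straddles it. Let $x_0=\inf\{x:F(x)\ge\rho\}$. Then $F(x_0)\ge\rho$ by right-continuity, $F(x_0-)\le\rho$, and the jump at $x_0$ is at most $b$. Hence $F(x_0)\in[\rho,\rho+b]=[\rho,1-\rho]$. Set $1-p:=F(x_0)$; then $p\in[\rho,1-\rho]$, since that interval is symmetric about $1/2$. With this $p$, your first construction $L(t)=(1-p)t$, $H(t)=1-p+pt$ works verbatim. For $s\le F(x_0)$ we have $F^{-1}(s)\le x_0$. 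For $s>F(x_0)$ we have $F^{-1}(s)>x_0$, since otherwise $F(x_0)\ge F(F^{-1}(s))\ge s$. Therefore $F^{-1}(L(t))\le x_0<F^{-1}(H(t))$, so $\delta(t)>0$ for every $t\in(0,1)$.

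This is essentially the paper's argument, with the roles of $\varepsilon=0,1$ swapped. The paper splits $\R$ into $E=(-\infty,x_0]$ and $E^c$ and takes $p$ to be the mass of $E$. It writes the law of $\xi$ as the mixture of its normalized restrictions to $E$ and $E^c$, and realizes these by functions $f_1,f_0$ of $T$ taking values in $E$ and $E^c$ respectively. Then $f_1(t)\neq f_0(t)$ holds automatically.
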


\medskip\noindent
In what follows, we use only the
measurability of \(f\) and \(\delta\), the uniform bound
\(p\in[\rho,1-\rho]\), and the nonvanishing condition \(\delta(t)\neq0\),
which ensures that the resulting diagonal rescalings are invertible after
conditioning.

\begin{proof}
Let \(\mu\) be the law of \(\xi\), and let
\[
   F(x)=\mu((-\infty,x]).
\]
Since \(\mu(\{x\})\leq Q(\xi)\leq b\) for every \(x\in\R\), each jump of
\(F\) has size at most \(b\).  Let
\[
   x_0=\inf\{x\in\R:F(x)\ge \rho\},
\]
and put \(E=(-\infty,x_0]\).  Then
\[
   \rho\leq \mu(E)=F(x_0)\leq \rho+b=1-\rho.
\]
Set
\[
   p=\mu(E)\in[\rho,1-\rho].
\]
Define probability measures
\[
   \mu_1(A)=\frac{\mu(A\cap E)}{p},
   \qquad
   \mu_0(A)=\frac{\mu(A\cap E^c)}{1-p}.
\]
Choose measurable functions \(f_0,f_1:(0,1)\to\R\) with pushforwards
\(\mu_0,\mu_1\), respectively.  Since \(\mu_0\) is supported on \(E^c\) and
\(\mu_1\) is supported on \(E\), after modifying on null sets we may assume
\[
   f_0(t)\in E^c,
   \qquad
   f_1(t)\in E
\]
for every \(t\in(0,1)\).  In particular, \(f_1(t)\neq f_0(t)\) for all
\(t\in(0,1)\).  Setting
\[
   f=f_0,
   \qquad
   \delta=f_1-f_0
\]
and letting \(T\sim{\rm Unif}(0,1)\) and
\(\varepsilon\sim{\rm Bernoulli}(p)\) be independent, we have that \(f(T)\)
has law \(\mu_0\) and \(f(T)+\delta(T)=f_1(T)\) has law \(\mu_1\).  Therefore
\[
   f(T)+\delta(T)\varepsilon
\]
has law
\[
   (1-p)\mu_0+p\mu_1=\mu,
\]
which is the law of \(\xi\).
\end{proof}

\section{Bernoulli relative anticoncentration}\label{sec:bbrac}

\noindent Fix \(0<\rho\leq 1/2\).  Let \(B=(B_1,\ldots,B_n)\) have independent
coordinates with
\[
   B_j\sim {\rm Bernoulli}(q_j),
   \qquad
   q_j\in[\rho,1-\rho]
   \quad (1\leq j\leq n).
\]
For a parameter \(\alpha\in(0,1/4)\), let \(Z=(Z_1,\ldots,Z_n)\) have iid
coordinates with the lazy symmetric three-point law
\begin{equation}\label{eq:lazy-def-noniid}
   \PP\{Z_i=0\}=1-2\alpha,
   \qquad
   \PP\{Z_i=1\}=\PP\{Z_i=-1\}=\alpha.
\end{equation}

\medskip\noindent
After the Bernoulli reduction, the basic question is how likely such a vector
is to lie in a fixed affine slice.  The next proposition gives the bound
we need: the probability pays an exponential price in the codimension.

\begin{proposition}[Bernoulli relative anticoncentration]\label{prop:bbrac}
For every \(0<\rho\leq 1/2\) there are constants
\(\alpha_{3.1}=\alpha_{3.1}(\rho)>0\),
\(\gamma_{3.1}=\gamma_{3.1}(\rho)<1\), and
\(k_{3.1}=k_{3.1}(\rho)\) such that the following holds.  Let
\(B=(B_1,\ldots,B_n)\) have independent coordinates with
\[
   B_j\sim {\rm Bernoulli}(q_j),
   \qquad
   q_j\in[\rho,1-\rho]
   \quad (1\leq j\leq n),
\]
and let \(Z\) be defined by \eqref{eq:lazy-def-noniid} with
\(\alpha=\alpha_{3.1}\).  If \(V\subseteq\R^n\) is a linear subspace of
codimension \(k\ge k_{3.1}\), then
\begin{equation}\label{eq:bbrac-noniid}
   \sup_{a\in\R^n}\PP\{B\in a+V\}
   \leq
   \gamma_{3.1}^k \PP\{Z\in V\}.
\end{equation}
\end{proposition}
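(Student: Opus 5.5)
The plan is to follow the Fourier/doubling strategy of \cite{HKSS2025}, using Lemma~\ref{lem:hkss-doubling} as the source of the factor \(2^{-k}\). The one new ingredient is a pointwise comparison of characteristic functions that is uniform in the biases \(q_j\in[\rho,1-\rho]\).

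\medskip\noindent
\emph{Reduction to a rational subspace and a Fourier formula.} Only the finitely many points of \(\{0,1\}^n\) and \(\{-1,0,1\}^n\) matter. So I first replace \(V\) by a rational subspace \(V'\) of the same codimension \(k\) with \(V'\supseteq V\cap\{-1,0,1\}^n\) and \(V'\cap\{-1,0,1\}^n=V\cap\{-1,0,1\}^n\). To build \(V'\), take the rational span of the integer points of \(V\) and extend it by generic rational vectors avoiding the finitely many remaining points of \(\{-1,0,1\}^n\). Differences of two points of \(\{0,1\}^n\) lie in \(\{-1,0,1\}^n\), and the points of \(\{0,1\}^n\) in a fixed translate \(a+V\) all differ by elements of \(V\). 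Hence the left side of \eqref{eq:bbrac-noniid} can only increase and the right side is unchanged. With \(V\) rational, I write \(V=\ker L\), where the integer matrix \(L\) has columns \(w_1,\dots,w_n\in\Z^k\). After permuting and changing basis of the target lattice, I arrange that \(k\) of the columns generate the relevant coordinate directions, so that the corresponding frequencies are honest coordinates on \(\T^k\). Fourier inversion then gives
\[
\PP\{B\in a+V\}\le\int_{\T^k}f\,d\mu,\qquad f(\theta)=\prod_j\bigl|1-q_j+q_je(\theta\cdot w_j)\bigr|,
\]
\[
\PP\{Z\in V\}=\int_{\T^k}g\,d\mu,\qquad g(\theta)=\prod_j\bigl(1-2\alpha+2\alpha\cos(2\pi\theta\cdot w_j)\bigr),
\]
and \(g\ge0\) once \(\alpha<1/4\).

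\medskip\noindent
\emph{Pointwise comparison and doubling.} For \(q\in[\rho,1-\rho]\) I use
\[
|1-q+qe(x)|\le\exp\bigl(-c_\rho\|x\|_\T^2\bigr)
\quad\text{and}\quad
1-2\alpha+2\alpha\cos 2\pi x\ge\exp\bigl(-C\alpha\|x\|_\T^2\bigr).
\]
Set \(F(\theta)=\sum_j\|\theta\cdot w_j\|_\T^2\). Then the level set \(S_s=\{f\ge e^{-s}\}\) is contained in \(\{F\le s/c_\rho\}\). The quasi-triangle inequality \(\|x+y\|_\T^2\le2\|x\|_\T^2+2\|y\|_\T^2\) gives \(S_s+S_s\subseteq\{F\le4s/c_\rho\}\subseteq\{g\ge e^{-\lambda s}\}\) with \(\lambda=4C\alpha/c_\rho\). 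I choose \(\alpha=\alpha_{3.1}(\rho)\) small enough that \(\lambda\le1\). Lemma~\ref{lem:hkss-doubling} then yields
\[
\mu\{g\ge e^{-\lambda s}\}\ge2^k\mu(S_s).
\]
Integrating via the layer-cake formula and substituting \(u=\lambda s\), with \(e^{-u/\lambda}\le e^{-u}\), gives
\[
\int f=\int_0^\infty\mu(S_s)e^{-s}\,ds\le2^{-k}\lambda^{-1}\int g.
\]
Choosing \(\gamma_{3.1}\in(1/2,1)\) and \(k_{3.1}\) with \(\lambda^{-1}2^{-k}\le\gamma_{3.1}^k\) for \(k\ge k_{3.1}\) finishes the argument.

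\medskip\noindent
\emph{Main obstacle.} The delicate step is verifying the hypothesis of Lemma~\ref{lem:hkss-doubling}: \(S_s\) must sit inside a product \(A_1\times\cdots\times A_k\) of closed sets with \(\mu(A_i)\le1/2\). For small \(s\) this comes from the \(k\) pivot coordinates, since \(\|\theta_i\|_\T^2\le s/c_\rho\) confines each coordinate to an arc of measure at most \(1/2\). For large \(s\) this fails. I would first try to split the range of \(s\) at a threshold \(s_0\asymp c_\rho\).

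\begin{itemize}
\item[(i)] For the tail \(s\ge s_0\), I would decompose \(\T^k\) according to which pivot coordinates are far from \(0\). On each piece, only the near-zero coordinates are used for doubling, while each far coordinate contributes a fixed factor \(e^{-c'}<1\) to \(f\) and is compared against \(g\ge(1-4\alpha)\)-type bounds on the matching piece.
\item[(ii)] The resulting losses must be absorbed by shrinking \(\alpha\) further and by taking \(\gamma_{3.1}\) closer to \(1\). This bookkeeping, keeping the constants uniform in \(n\) and in the \(q_j\), is where I expect the real work to lie.
\end{itemize}
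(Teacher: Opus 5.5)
There is a genuine gap, at exactly the step you flag. The Fourier framework is the paper's, and the rest of your proposal holds up:

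\begin{itemize}
\item Your rationalization (a rational \(V'\) of the same codimension with \(V'\cap\{-1,0,1\}^n=V\cap\{-1,0,1\}^n\)) is a valid variant of the paper's \(a\)-dependent \(U=\Span(S-b_0)\).
\item Your Gaussian-type bounds plus the quasi-triangle inequality do the job of the paper's pointwise comparison \(h_\alpha(u+v)^2\ge|\phi_q(u)|\,|\phi_q(v)|\).
\end{itemize}

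The missing step is the heart of the proof, not a corner case. The level sets carrying \(\int f\) sit at \(s\asymp k\); this is already visible for \(V=\{x_1=\cdots=x_k=0\}\), where \(f\) is a product of \(k\) one-dimensional factors. So cutting at a constant \(s_0\asymp c_\rho\) leaves essentially all of \(\int f\) in the tail, where you have no argument.

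The tail plan as you describe it also cannot close. Suppose you sum over all sets \(I\) of far pivot coordinates, gaining \(1/2\) only from each near coordinate and a pointwise ratio \(r'\) from each far one. The total is \(\sum_I 2^{-(k-|I|)}(r')^{|I|}=(\tfrac12+r')^k\). But any admissible \(r'\) is at least \(\sup_{\|x\|_{\T}\ge1/4}|1-q+qe^{2\pi ix}|=\sqrt{1-2q(1-q)}\ge1/\sqrt2\). So the total is exponentially large, and no choice of \(\alpha\) or \(\gamma\) absorbs it.

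Two ideas are missing.

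\emph{First}, the far set \(H_0=\{x\in\T:\|N_0x\|_{\T}\ge1/4\}\) is itself closed of measure exactly \(1/2\). Set \(C_I=\prod_i A_i\) with \(A_i=H_0\) for \(i\in I\) and \(A_i=\{\|N_0x\|_{\T}\le1/4\}\) otherwise. Then Lemma~\ref{lem:hkss-doubling} applies to closed subsets of \(C_I\) in all \(k\) coordinates, so the far coordinates need not be sacrificed.

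\emph{Second}, something must stop the up to \(2^k\) boxes from cancelling the \(2^k\) gain. The paper cuts at height \(\tau=e^{-\beta k}\), i.e.\ at \(s=\beta k\), not at \(s=O(1)\).
\begin{itemize}
\item Below the cut, \(f\le g^2\) gives \(\int\min(f,\tau)\le\tau^{1/2}\int g\). A bound relative to \(\int g\) is essential here, since \(\int g=\PP\{Z\in V\}\) can be as small as \(e^{-cn}\).
\item Above the cut, each point of \(S_\eta=\{f\ge\eta\}\) has at most \(\delta_0k\) far pivot coordinates. So at most \((3/2)^k\) boxes occur, and pigeonholing one box per level gives \(\mu(S_\eta+S_\eta)\ge(4/3)^k\mu(S_\eta)\), hence the factor \((3/4)^k\).
\end{itemize}

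Alternatively, your idea (i) can be repaired by dividing out the far pivot factors. Let \(f_I,g_I\) be the products omitting the pivot columns in \(I\), and put \(r_\rho=\sqrt{1-2\rho(1-\rho)}\). Then \(f\le r_\rho^{|I|}f_I\) on \(C_I\), and \(g_I\le(1-4\alpha)^{-|I|}g\) everywhere. Doubling \(\{f_I\ge\eta\}\cap C_I\) in all \(k\) coordinates and summing over \(I\) yields \(\int f\le\bigl(\tfrac{1+r_\rho/(1-4\alpha)}{2}\bigr)^k\int g\), up to your constant \(\lambda^{-1}\). This suffices once \(4\alpha<1-r_\rho\).

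As written, though, the proposal supplies neither mechanism, so the central estimate is not established.
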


\begin{proof}
\medskip\noindent
The argument below follows the same general template as the proof of
Proposition~2.1 in \cite{HKSS2025}, with the unbiased Bernoulli factors there
replaced by the biased one-dimensional Fourier factors
\eqref{eq:phi-h-def-noniid}.  We first treat
the rational case.  Assume that \(V\subseteq\R^n\) is a
rational linear subspace of codimension \(k\).  Choose an integer matrix
\(L\in\Z^{k\times n}\) with \(\ker L=V\).  Permuting coordinates, applying
rational row operations, and then multiplying by a common positive
denominator, we may assume that the first \(k\) columns of \(L\) are
\[
   N_0 e_1,\ldots,N_0 e_k
\]
for some positive integer \(N_0\).  Let \(w_1,\ldots,w_n\in\Z^k\) denote the
columns of \(L\).

\medskip\noindent
Define
\begin{equation}\label{eq:phi-h-def-noniid}
   \phi_p(u)=1-p+pe^{2\pi i u},
   \qquad
   h_\alpha(u)=1-2\alpha+2\alpha\cos(2\pi u).
\end{equation}
Since \(\alpha<1/4\), \(h_\alpha(u)\ge1-4\alpha>0\) for all \(u\in\T\).

\medskip\noindent
Fourier inversion gives, for every \(s\in\Z^k\),
\[
   \PP\{LB=s\}
   =
   \int_{\T^k} e^{-2\pi i\theta\cdot s}
      \prod_{j=1}^n \phi_{q_j}(\theta\cdot w_j)\,d\theta,
\]
so
\begin{equation}\label{eq:fourier-upper-B-noniid}
   \sup_{a\in\R^n}\PP\{B\in a+V\}
   \leq
   \int_{\T^k}\prod_{j=1}^n |\phi_{q_j}(\theta\cdot w_j)|\,d\theta.
\end{equation}
Indeed, if \((a+V)\cap\{0,1\}^n=\varnothing\), then the probability is zero.
Otherwise \(La=Lb\in\Z^k\) for every \(b\in(a+V)\cap\{0,1\}^n\), so
\(\PP\{B\in a+V\}=\PP\{LB=La\}\).  Similarly,
\begin{equation}\label{eq:fourier-Z-noniid}
   \PP\{Z\in V\}
   =
   \int_{\T^k}\prod_{j=1}^n h_\alpha(\theta\cdot w_j)\,d\theta.
\end{equation}

\medskip\noindent
Let
\[
   A_\rho=2\rho(1-\rho).
\]
For \(p\in[\rho,1-\rho]\), put
\[
   A_p=2p(1-p).
\]
Then \(A_p\ge A_\rho>0\).  Fix once and for all
\begin{equation}\label{eq:alpha-choice-noniid}
   0<\alpha\leq \min\left\{\frac1{128},\frac{A_\rho}{64}\right\}.
\end{equation}
Put
\[
   D(u)=1-\cos(2\pi u).
\]
Then \(D(u)\in[0,2]\), and
\[
   |\phi_p(u)|^2=1-A_pD(u),
   \qquad
   h_\alpha(u)=1-2\alpha D(u).
\]

\medskip\noindent
The proof now reduces to a pointwise comparison between the Fourier factors of
the Bernoulli law and those of the lazy comparison law.

\begin{lemma}[One-dimensional comparison]\label{lem:onedim-comparison-noniid}
With \(\alpha\) as in \eqref{eq:alpha-choice-noniid}, for all
\(p,p'\in[\rho,1-\rho]\) and all \(u,v\in\T\),
\begin{align}
   |\phi_p(u)| &\leq h_\alpha(u)^2, \label{eq:pointwise-one-noniid}\\
   |\phi_p(u)|\,|\phi_{p'}(v)| &\leq h_\alpha(u+v)^2.
   \label{eq:pointwise-two-noniid}
\end{align}
\end{lemma}

\begin{proof}
Since \(A_pD(u)\in[0,1]\),
\[
   |\phi_p(u)|=\sqrt{1-A_pD(u)}
   \leq 1-\frac{A_pD(u)}2
   \leq 1-\frac{A_\rho D(u)}2.
\]
On the other hand,
\[
   h_\alpha(u)^2=(1-2\alpha D(u))^2
   \ge 1-4\alpha D(u)
   \ge 1-\frac{A_\rho D(u)}2,
\]
because \(4\alpha\leq A_\rho/2\).  This proves
\eqref{eq:pointwise-one-noniid}.

\medskip\noindent
For \eqref{eq:pointwise-two-noniid}, set \(x=A_pD(u)\) and
\(y=A_{p'}D(v)\).  Then \(x,y\in[0,1]\).  We first note that
\begin{equation}\label{eq:sqrt-xy-noniid}
   \sqrt{(1-x)(1-y)}\leq 1-\frac{x+y}{4}.
\end{equation}
Indeed, the right-hand side is at least \(1/2\), and after squaring the
desired inequality follows from
\[
   (1-x)(1-y)
   \leq
   1-(x+y)+\frac{(x+y)^2}{4}
   \leq
   \left(1-\frac{x+y}{4}\right)^2.
\]
Hence
\begin{equation}\label{eq:phi-prod-upper-noniid}
   |\phi_p(u)|\,|\phi_{p'}(v)|
   \leq
   1-\frac{A_\rho}{4}(D(u)+D(v)).
\end{equation}
Also
\begin{equation}\label{eq:D-subadd-noniid}
   D(u+v)\leq 2D(u)+2D(v).
\end{equation}
Therefore
\[
   h_\alpha(u+v)^2
   \ge
   1-4\alpha D(u+v)
   \ge
   1-8\alpha(D(u)+D(v)).
\]
Since \(8\alpha\leq A_\rho/4\), this lower bound is at least the right-hand
side of \eqref{eq:phi-prod-upper-noniid}.  This proves
\eqref{eq:pointwise-two-noniid}.
\end{proof}

\medskip\noindent
Now set
\[
   F(\theta)=\prod_{j=1}^n |\phi_{q_j}(\theta\cdot w_j)|,
   \qquad
   G(\theta)=\prod_{j=1}^n h_\alpha(\theta\cdot w_j).
\]
By \eqref{eq:pointwise-one-noniid},
\begin{equation}\label{eq:F-G-square-noniid}
   F(\theta)\leq G(\theta)^2
   \qquad\text{for all }\theta\in\T^k.
\end{equation}
We prove that, for suitable \(\gamma<1\) and all large enough \(k\),
\begin{equation}\label{eq:integral-comparison-noniid}
   \int_{\T^k}F(\theta)\,d\theta
   \leq
   \gamma^k\int_{\T^k}G(\theta)\,d\theta.
\end{equation}

\medskip\noindent
Let
\[
   r_\rho=\sqrt{1-A_\rho}<1.
\]
Choose \(\delta_0\in(0,1/2)\) so small that
\[
   H(\delta_0)<\log(3/2),
\]
where \(H(t)=-t\log t-(1-t)\log(1-t)\) is the binary entropy function.  Then
choose
\[
   0<\beta<\delta_0(-\log r_\rho).
\]
Put \(\tau=e^{-\beta k}\).  We split
\[
   \int F
   =
   \int \min(F,\tau)+\int(F-\tau)_+.
\]
For the low part, \eqref{eq:F-G-square-noniid} gives
\begin{equation}\label{eq:low-part-noniid}
   \int_{\T^k}\min(F,\tau)
   \leq
   \tau^{1/2}\int_{\T^k} G.
\end{equation}

\medskip\noindent
For the high part, for \(\eta\in[\tau,1]\), define the closed level set
\[
   S_\eta=\{\theta\in\T^k:F(\theta)\ge \eta\}.
\]
Since the first \(k\) columns of \(L\) are \(N_0e_1,\ldots,N_0e_k\), for
\(\theta\in S_\eta\),
\[
   \prod_{i=1}^k |\phi_{q_i}(N_0\theta_i)|
   \ge
   F(\theta)
   \ge
   e^{-\beta k}.
\]
If \(\|N_0x\|_{\T}>1/4\), then \(D(N_0x)\ge1\), and therefore
\[
   |\phi_{q_i}(N_0x)|\leq \sqrt{1-A_{q_i}}\leq r_\rho.
\]
Thus each \(\theta\in S_\eta\) has at most
\[
   M=\left\lfloor\frac{\beta k}{-\log r_\rho}\right\rfloor
\]
coordinates \(i\) with \(\|N_0\theta_i\|_{\T}>1/4\).  By the choice of
\(\beta\), \(M\leq \delta_0k\).  For all sufficiently large \(k\),
\[
   \sum_{j=0}^{M}\binom{k}{j}
   \leq
   \exp\!\bigl(H(M/k)k\bigr)
   \leq
   \exp\!\bigl(H(\delta_0)k\bigr)
   \leq
   (3/2)^k,
\]
by the standard entropy bound on the lower tail of the binomial coefficients.

\medskip\noindent
For \(I\subseteq[k]\) with \(|I|\leq M\), let \(B_I\) be the set of
\(\theta\in\T^k\) such that
\[
   \|N_0\theta_i\|_{\T}>1/4 \quad (i\in I),
   \qquad
   \|N_0\theta_i\|_{\T}\leq 1/4 \quad (i\notin I).
\]
Then \(S_\eta\subseteq \bigcup_{|I|\leq M} B_I\).  Hence for every
\(\eta\in[\tau,1]\) there is some \(I\) with
\begin{equation}\label{eq:pigeon-noniid}
   \mu(S_\eta\cap B_I)\ge (2/3)^k \mu(S_\eta).
\end{equation}
Let
\[
   G_0=\{x\in\T:\|N_0x\|_{\T}\leq 1/4\},
   \qquad
   H_0=\{x\in\T:\|N_0x\|_{\T}\ge 1/4\}.
\]
Both are closed and have measure \(1/2\): multiplication by \(N_0\) is
measure-preserving on \(\T\), and the set \(\{x\in\T:\|x\|_{\T}\leq 1/4\}\) has
measure \(1/2\).  For the chosen \(I\), define the
closed box
\[
   C_I=\prod_{i=1}^k A_i,
   \qquad
   A_i=
   \begin{cases}
      H_0, & i\in I,\\
      G_0, & i\notin I.
   \end{cases}
\]
Then \(B_I\subseteq C_I\), so \eqref{eq:pigeon-noniid} implies
\[
   \mu(S_\eta\cap C_I)\ge (2/3)^k \mu(S_\eta).
\]
Applying Lemma~\ref{lem:hkss-doubling} to the closed set \(S_\eta\cap C_I\),
\begin{equation}\label{eq:doubling-level-noniid}
   \mu(S_\eta+S_\eta)\ge (4/3)^k \mu(S_\eta).
\end{equation}

\medskip\noindent
If \(\theta,\theta'\in S_\eta\), then
\eqref{eq:pointwise-two-noniid} gives
\[
   h_\alpha((\theta+\theta')\cdot w_j)^2
   \ge
   |\phi_{q_j}(\theta\cdot w_j)|\,|\phi_{q_j}(\theta'\cdot w_j)|
\]
for every \(j\).  Multiplying over \(j\) yields
\[
   G(\theta+\theta')\ge F(\theta)^{1/2}F(\theta')^{1/2}\ge \eta.
\]
Therefore
\[
   S_\eta+S_\eta\subseteq \{\theta\in\T^k:G(\theta)\ge \eta\}.
\]
Combining this with \eqref{eq:doubling-level-noniid},
\[
   \mu(S_\eta)\leq (3/4)^k \mu\{G\ge \eta\}.
\]
Using the layer-cake representation,
\begin{align}
   \int_{\T^k}(F-\tau)_+\,d\theta
   &=\int_\tau^1 \mu\{\theta\in\T^k:F(\theta)\ge \eta\}\,d\eta \notag\\
   &=\int_\tau^1 \mu(S_\eta)\,d\eta \notag\\
   &\leq (3/4)^k \int_\tau^1 \mu\{G\ge\eta\}\,d\eta
   \leq (3/4)^k \int_{\T^k}G(\theta)\,d\theta.
   \label{eq:high-part-noniid}
\end{align}
Together with \eqref{eq:low-part-noniid},
\[
   \int F
   \leq
   \left(e^{-\beta k/2}+(3/4)^k\right)\int G.
\]
For all sufficiently large \(k\), this is at most
\(\gamma_{3.1}^k \int G\) for some \(\gamma_{3.1}=\gamma_{3.1}(\rho)<1\).
This proves \eqref{eq:integral-comparison-noniid}, and hence
\eqref{eq:bbrac-noniid}, in the rational case.

\medskip\noindent
It remains to remove the rationality assumption.  For slices of the discrete
cube, this comes down to the following simple observation.

\begin{lemma}[Cube-slice rationalization]\label{lem:cube-slice-noniid}
Let \(V\subseteq\R^n\) be a linear subspace and let \(a\in\R^n\).  Put
\[
   S=(a+V)\cap\{0,1\}^n.
\]
If \(S\neq\varnothing\), choose \(b_0\in S\) and define
\[
   U=\Span(S-b_0).
\]
Then \(U\) is a rational linear subspace, \(U\subseteq V\), and
\[
   (a+V)\cap\{0,1\}^n=(b_0+U)\cap\{0,1\}^n.
\]
\end{lemma}

\begin{proof}
The space \(U\) is rational because it is spanned by vectors in
\(\{-1,0,1\}^n\), and \(U\subseteq V\) because \(S-b_0\subseteq V\).  The
inclusion
\[
   (a+V)\cap\{0,1\}^n\subseteq (b_0+U)\cap\{0,1\}^n
\]
holds by construction.  Conversely, if \(x\in b_0+U\), then
\(x-b_0\in U\subseteq V\), while \(b_0\in a+V\).  Hence \(x\in a+V\).
\end{proof}

\medskip\noindent
Let now \(V\subseteq\R^n\) be arbitrary with \(\codim V=k\ge k_{3.1}\), and
fix \(a\in\R^n\).  Put
\[
   S=(a+V)\cap\{0,1\}^n.
\]
If \(S=\varnothing\), then \(\PP\{B\in a+V\}=0\).  Otherwise choose
\(b_0\in S\), set
\[
   U=\Span(S-b_0),
\]
and apply Lemma~\ref{lem:cube-slice-noniid}.  Since \(U\subseteq V\),
\(\ell:=\codim U\ge k\).  The rational case gives
\[
   \PP\{B\in a+V\}
   =
   \PP\{B\in b_0+U\}
   \leq
   \gamma_{3.1}^{\ell}\PP\{Z\in U\}
   \leq
   \gamma_{3.1}^{k}\PP\{Z\in V\}.
\]
This completes the proof.
\end{proof}

\section{Comparison for inhomogeneous entries and the thin/thick argument}\label{sec:comparison-thin}

\medskip\noindent
Fix \(b\in(0,1)\), and let
\[
   \rho_0=\frac{1-b}{2}.
\]
Let \(\alpha_{3.1},\gamma_{3.1},k_{3.1}\) be the constants supplied by
Proposition~\ref{prop:bbrac} for \(\rho=\rho_0\), and put
\[
   \lambda_{4.1}=1-2\alpha_{3.1}.
\]

\medskip\noindent
For each pair \((i,j)\), apply Lemma~\ref{lem:atom-decomp} to the law of
\(A_{ij}\).  Thus there exist
\[
   p_{ij}\in[\rho_0,1-\rho_0],
\]
measurable functions \(f_{ij},\delta_{ij}:(0,1)\to\R\) with
\(\delta_{ij}(t)\neq0\), and independent random variables
\[
   T_{ij}\sim {\rm Unif}(0,1),
   \qquad
   \varepsilon_{ij}\sim {\rm Bernoulli}(p_{ij}),
\]
such that
\[
   A_{ij}\stackrel d=f_{ij}(T_{ij})+\delta_{ij}(T_{ij})\varepsilon_{ij}.
\]

\medskip\noindent
Let \(X_j\in\R^n\) denote the \(j\)th column of \(A\), and realize it as
\[
   X_j=f_j+D_j\varepsilon_j,
\]
where
\[
   f_j=(f_{1j}(T_{1j}),\ldots,f_{nj}(T_{nj}))^\top,
   \qquad
   D_j=\operatorname{diag}(\delta_{1j}(T_{1j}),\ldots,\delta_{nj}(T_{nj})),
\]
and
\[
   \varepsilon_j=(\varepsilon_{1j},\ldots,\varepsilon_{nj})^\top.
\]
The columns \(X_1,\ldots,X_n\) are independent, though not identically
distributed.

\medskip\noindent
Let \(Z_j=(Z_{1j},\ldots,Z_{nj})^\top\) have iid coordinates with the lazy
law \eqref{eq:lazy-def-noniid}, independent of everything else, and define
\[
   Y_j=D_j Z_j.
\]

\medskip\noindent
After conditioning on the variables in the decomposition of the
entries, each column has the Bernoulli form treated in
Proposition~\ref{prop:bbrac}.  Averaging that conditional estimate gives the
following comparison for the actual columns of \(A\).

\begin{proposition}[Columnwise relative anticoncentration]\label{prop:RA-column}
For every \(j\in[n]\) and every linear subspace \(V\subseteq\R^n\) of
codimension \(k\ge k_{3.1}\),
\begin{equation}\label{eq:RA-column}
   \sup_{t\in\R^n}\PP\{X_j+t\in V\}
   \leq
   \gamma_{3.1}^k \PP\{Y_j\in V\}.
\end{equation}
\end{proposition}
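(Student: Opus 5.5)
We condition on the auxiliary randomness \(\mathcal T_j=(T_{1j},\ldots,T_{nj})\) and reduce to Proposition~\ref{prop:bbrac}. Fix \(j\), a subspace \(V\) of codimension \(k\ge k_{3.1}\), and a shift \(t\in\R^n\). Since \(\varepsilon_j\) is independent of \(\mathcal T_j\), and \(Z_j\) is independent of \(\mathcal T_j\), Fubini gives
\[
   \PP\{X_j+t\in V\}=\EE\bigl[\PP\{f_j+D_j\varepsilon_j+t\in V\mid \mathcal T_j\}\bigr],
   \qquad
   \PP\{Y_j\in V\}=\EE\bigl[\PP\{D_jZ_j\in V\mid \mathcal T_j\}\bigr].
\]
Here \(f_j\) and \(D_j\) are deterministic once \(\mathcal T_j\) is fixed. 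The integrands are measurable in \(\mathcal T_j\), because they are integrals of indicators of Borel sets in \((\mathcal T_j,\varepsilon_j)\) or \((\mathcal T_j,Z_j)\).

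Now fix a realization of \(\mathcal T_j\), so that \(D=D_j\) is a fixed invertible diagonal matrix (each \(\delta_{ij}(T_{ij})\neq0\)) and \(f=f_j\) is a fixed vector. Then
\[
   f+D\varepsilon_j+t\in V\iff \varepsilon_j\in D^{-1}(V-f-t)=a+V',
\]
where \(V'=D^{-1}V\) and \(a=-D^{-1}(f+t)\). Because \(D\) is invertible, \(V'\) is a linear subspace with \(\codim V'=k\). Similarly, \(DZ_j\in V\iff Z_j\in V'\). The vector \(\varepsilon_j\) has independent Bernoulli\((p_{ij})\) coordinates with \(p_{ij}\in[\rho_0,1-\rho_0]\), and \(Z_j\) has the lazy law with \(\alpha=\alpha_{3.1}(\rho_0)\). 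Proposition~\ref{prop:bbrac} with \(\rho=\rho_0\) therefore yields
\[
   \PP\{f+D\varepsilon_j+t\in V\mid\mathcal T_j\}\leq \sup_{a'}\PP\{\varepsilon_j\in a'+V'\}\leq\gamma_{3.1}^k\PP\{Z_j\in V'\}=\gamma_{3.1}^k\PP\{D Z_j\in V\mid\mathcal T_j\}.
\]

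Taking expectations over \(\mathcal T_j\) gives \(\PP\{X_j+t\in V\}\leq\gamma_{3.1}^k\PP\{Y_j\in V\}\) for every \(t\). Taking the supremum over \(t\) proves \eqref{eq:RA-column}.

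Two points need checking. First, the decomposition realizes \(X_j\) only in distribution; but the event \(\{X_j+t\in V\}\) depends only on the law of \(X_j\), and the columns of \(A\) have the same law as the constructed columns, so this is harmless. Second, Proposition~\ref{prop:bbrac} must apply uniformly over all biases in \([\rho_0,1-\rho_0]\) and all subspaces. It does, with constants depending only on \(\rho_0\); this uniformity is what lets the conditional bound survive averaging. The bound is applied pointwise in \(\mathcal T_j\) before any supremum over \(t\) is taken, so no interchange of supremum and expectation is needed.
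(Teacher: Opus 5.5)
Your proposal is correct and is essentially the paper's proof: condition on the \(T_{ij}\)'s, rewrite \(\{X_j+t\in V\}\) as \(\{\varepsilon_j\in D_j^{-1}(V-f_j-t)\}\), apply Proposition~\ref{prop:bbrac} to this codimension-\(k\) slice, then use \(Z_j\in D_j^{-1}V\iff D_jZ_j\in V\) and average over the \(T_{ij}\)'s. Your extra checks are sound details that the paper leaves implicit. These are measurability, preservation of codimension under \(D_j^{-1}\), the realization of \(X_j\) only in distribution, and the uniformity of the constants in the biases.
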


\begin{proof}
Condition on \(T_{1j},\ldots,T_{nj}\).  Then \(f_j\) and \(D_j\) are fixed,
and \(D_j\) is invertible.  For fixed \(t\in\R^n\),
\[
   X_j+t\in V
   \quad\Longleftrightarrow\quad
   \varepsilon_j\in D_j^{-1}(V-f_j-t).
\]
Proposition~\ref{prop:bbrac}, applied conditionally to the Bernoulli vector
\(\varepsilon_j\), yields
\[
   \PP\{X_j+t\in V\mid T_{1j},\ldots,T_{nj}\}
   \leq
   \gamma_{3.1}^k \PP\{Z_j\in D_j^{-1}V\mid T_{1j},\ldots,T_{nj}\}.
\]
Since \(Z_j\in D_j^{-1}V\) is equivalent to \(D_jZ_j\in V\), averaging over
the \(T_{ij}\)'s proves \eqref{eq:RA-column}.
\end{proof}

\medskip\noindent
We also need a codimension bound for the comparison columns
themselves.

\begin{lemma}[Odlyzko for the comparison columns]\label{lem:Yj-odlyzko}
For every \(j\in[n]\) and every affine subspace \(W\subseteq\R^n\) of
codimension \(r\),
\begin{equation}\label{eq:Yj-odlyzko}
   \PP\{Y_j\in W\}\leq \lambda_{4.1}^r.
\end{equation}
\end{lemma}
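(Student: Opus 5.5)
The plan is to condition on the random diagonal matrix and then apply Lemma~\ref{lem:odlyzko} to the lazy vector \(Z_j\). First, condition on \(T_{1j},\ldots,T_{nj}\). This fixes \(D_j=\operatorname{diag}(\delta_{1j}(T_{1j}),\ldots,\delta_{nj}(T_{nj}))\), and by Lemma~\ref{lem:atom-decomp} every diagonal entry is nonzero, so \(D_j\) is invertible. Since \(Z_j\) is independent of the \(T_{ij}\), its conditional law is still the iid lazy law \eqref{eq:lazy-def-noniid} with \(\alpha=\alpha_{3.1}\).

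Second, write the affine subspace as \(W=w+V\), with \(V\) a linear subspace of codimension \(r\). Then
\[
   D_jZ_j\in W
   \quad\Longleftrightarrow\quad
   Z_j\in D_j^{-1}w+D_j^{-1}V .
\]
Because \(D_j^{-1}\) is an invertible linear map, \(D_j^{-1}V\) is again a linear subspace of codimension \(r\), so \(D_j^{-1}W\) is an affine subspace of codimension \(r\).

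Third, apply Lemma~\ref{lem:odlyzko} to \(Z_j\). The coordinates \(Z_{ij}\) are independent, and their largest atom is
\[
   \max\{1-2\alpha_{3.1},\ \alpha_{3.1}\}=1-2\alpha_{3.1}=\lambda_{4.1},
\]
since \(\alpha_{3.1}<1/4\) forces \(\alpha_{3.1}<1-2\alpha_{3.1}\). Also \(\lambda_{4.1}\in(0,1)\), so the lemma applies and gives the conditional bound
\[
   \PP\{Y_j\in W\mid T_{1j},\ldots,T_{nj}\}\leq \lambda_{4.1}^r .
\]

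Finally, average over the \(T_{ij}\) to obtain \eqref{eq:Yj-odlyzko}. The only point needing care is measurability of the conditional probability as a function of the \(T\)'s. The event \(\{D_jZ_j\in W\}\) is a Borel set in the joint variables \((T_{\cdot j},Z_j)\), because it is determined by \(\dist(D_jZ_j,W)=0\) with \(D_j\) measurable. Fubini therefore applies, and no step here is a real obstacle.
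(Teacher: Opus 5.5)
Your proposal is correct and follows the paper's proof: condition on \(T_{1j},\ldots,T_{nj}\), apply Lemma~\ref{lem:odlyzko} conditionally, then average. The only cosmetic difference is where the Odlyzko bound is applied. The paper applies it directly to the coordinates \(\delta_{ij}(T_{ij})Z_{ij}\) of \(Y_j\), whose largest atom is \(1-2\alpha_{3.1}\) at zero because \(\delta_{ij}\neq0\). You instead pull \(W\) back through \(D_j^{-1}\) and apply it to \(Z_j\), which is equivalent.
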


\begin{proof}
Condition on \(T_{1j},\ldots,T_{nj}\).  Then \(Y_j=D_jZ_j\), and each
coordinate has largest atom \(1-2\alpha_{3.1}=\lambda_{4.1}\), at zero.  Apply
Lemma~\ref{lem:odlyzko} conditionally and average over the \(T_{ij}\)'s.
\end{proof}

\medskip\noindent
Fix \(1\leq k\leq n\), and put
\[
   m=\left\lceil \frac n2\right\rceil,
   \qquad
   r_\ast=\min\{m,n-k\}.
\]
For \(V\in\mathcal S(n,n-k)\), define
\[
   \rho_j(V)=\PP\{X_j\in V\},
   \qquad
   \sigma_j(V)=\PP\{Y_j\in V\},
\]
\[
   \bar\rho(V)=\frac1n\sum_{j=1}^n \rho_j(V),
   \qquad
   \bar\sigma(V)=\frac1n\sum_{j=1}^n \sigma_j(V),
\]
and, for \(0\leq s\leq n\),
\[
   e_s(V)=\sum_{\substack{S\subseteq[n]\\ |S|=s}} \prod_{j\in S}\rho_j(V).
\]
By Lemma~\ref{lem:grassmann-meas}, each of these maps is Borel on
\(\mathcal S(n,n-k)\).

\medskip\noindent
Set
\[
   \tau=\lambda_{4.1}^{1/4}.
\]
We call \(V\in\mathcal S(n,n-k)\) thin if
\begin{equation}\label{eq:thin-def-noniid}
   e_k(V)\leq \binom{n}{k}\tau^{nk},
\end{equation}
and thick otherwise.

\medskip\noindent
We now split the subspaces into thin and thick classes.  In the thin case,
once a spanning set of columns is fixed, the remaining columns have too
little total mass to land in the same subspace with appreciable probability.

\begin{claim}[Thin subspaces]\label{claim:thin-noniid}
For every \(1\leq k\leq n\),
\begin{equation}\label{eq:thin-bound-noniid}
   \PP\{\rank(X_1,\ldots,X_n)=n-k\text{ and }\Span(X_1,\ldots,X_n)\text{ is thin}\}
   \leq
   \binom{n}{k}^2 \tau^{nk}.
\end{equation}
\end{claim}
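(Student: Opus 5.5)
If \(\rank(X_1,\ldots,X_n)=n-k\), then some set \(R\subseteq[n]\) of \(n-k\) columns is linearly independent and spans \(V=\Span(X_1,\ldots,X_n)\). Equivalently, its complement \(S=[n]\setminus R\) has \(|S|=k\), and every \(X_j\) with \(j\in S\) lies in \(\Span(X_i:i\in R)\). A union bound over the \(\binom nk\) choices of \(S\) therefore reduces the claim to proving, for each fixed \(S\) with \(|S|=k\),
\[
   \PP\Bigl\{\dim W_R=n-k,\ W_R \text{ thin},\ X_j\in W_R\ \forall j\in S\Bigr\}\leq \binom nk\tau^{nk},
   \qquad W_R:=\Span(X_i:i\in R).
\]

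To prove this, I condition on the columns \((X_i)_{i\in R}\). On the event \(\dim W_R=n-k\), the subspace \(W_R\) is a Borel function of these columns by Lemma~\ref{lem:span-map}. The set of thin subspaces is Borel because \(e_k\) is Borel by Lemma~\ref{lem:grassmann-meas}. The columns \(X_j\), \(j\in S\), are independent of \(W_R\) and of one another. So, conditionally on \(W_R=V\) with \(V\) thin, the probability that all of them land in \(V\) is
\[
   \prod_{j\in S}\rho_j(V)\leq e_k(V)\leq \binom nk\tau^{nk},
\]
where the first inequality holds because this product is one of the nonnegative summands of \(e_k(V)\), and the second is the thinness condition \eqref{eq:thin-def-noniid}. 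Integrating over the conditioning, by Fubini, gives the displayed bound for each \(S\). Summing over \(S\) then yields \eqref{eq:thin-bound-noniid}.

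The only delicate point is a formal one. The event in the claim asks that the full span be thin, while the conditioning fixes \(W_R\). These coincide on the event in question, since \(W_R=\Span(X_1,\ldots,X_n)\) whenever \(\dim W_R=n-k\) and all other columns lie in \(W_R\). I will therefore write the event for fixed \(S\) as the intersection of \(\{\dim W_R=n-k,\ W_R\text{ thin}\}\), which depends only on \((X_i)_{i\in R}\), with \(\{X_j\in W_R\ \forall j\in S\}\). The conditioning step then applies directly.

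Measurability is handled by Lemmas~\ref{lem:grassmann-meas} and~\ref{lem:span-map}, so the main thing to state carefully is that we use the product \(\prod_{j\in S}\rho_j(V)\) for the specific \(S\), bounded by the full symmetric sum \(e_k(V)\). This is exactly why thinness is defined via \(e_k\) rather than \(\bar\rho\).
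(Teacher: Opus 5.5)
Your proposal is correct and follows essentially the same argument as the paper. You take a union bound over the $\binom{n}{k}$ spanning index sets of size $n-k$, condition on those columns, bound $\prod_{j\notin I}\rho_j(V)$ by the single summand it contributes to $e_k(V)\le\binom{n}{k}\tau^{nk}$, and sum over the index sets. Your explicit treatment of measurability and of the identity $W_R=\Span(X_1,\ldots,X_n)$ on the event spells out what the paper leaves implicit.
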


\begin{proof}
If \(\Span(X_1,\ldots,X_n)=V\in\mathcal S(n,n-k)\), then some subset
\(I\subseteq[n]\) of size \(n-k\) spans \(V\), and all remaining columns lie
in \(V\).  Fix such an \(I\).  Conditional on \((X_i)_{i\in I}\), the
remaining columns are independent.  Therefore, on the event that
\(V=\Span((X_i)_{i\in I})\) is thin,
\[
   \PP\{X_j\in V\text{ for all }j\notin I\mid (X_i)_{i\in I}\}
   =
   \prod_{j\notin I}\rho_j(V).
\]
This product is one term in \(e_k(V)\), so by
\eqref{eq:thin-def-noniid} it is at most
\(\binom{n}{k}\tau^{nk}\).  There are \(\binom{n}{k}\) possible choices of
\(I\), and summing over them gives \eqref{eq:thin-bound-noniid}.
\end{proof}

\medskip\noindent
For the remaining subspaces, this simple argument is no longer enough.  Here
we compare the original columns to a mixture of the \(Y_j\)'s and then use
Lemma~\ref{lem:weighted-cancel} to remove the weight that appears.  Let
\(\mathcal T_k\subseteq\mathcal S(n,n-k)\) denote the Borel set of thick
subspaces.

\begin{claim}[Thick subspaces]\label{claim:thick-noniid}
Assume \(k\ge k_{3.1}\).  Then
\begin{equation}\label{eq:thick-bound-noniid}
   \PP\{\rank(X_1,\ldots,X_n)=n-k\text{ and }\Span(X_1,\ldots,X_n)\text{ is thick}\}
   \leq
   n2^{3n}\gamma_{3.1}^{nk/2}.
\end{equation}
\end{claim}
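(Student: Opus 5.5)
We follow the thick-subspace argument of HKSS, adapted to the columnwise probabilities.

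\emph{Step 1: decompose by spanning sets.} On the event in \eqref{eq:thick-bound-noniid}, write \(V=\Span(X_1,\ldots,X_n)\). Some index set \(I\) of size \(n-k\) spans \(V\), and all other columns lie in \(V\). It is more convenient to use a split into two halves. Since \(r_\ast=\min\{m,n-k\}\), we can pick disjoint \(J_1,J_2\subseteq[n]\) with \(|J_1|\) and \(|J_2|\) both about \(n/2\). Each half contains a maximal independent subset, and together these span \(V\). There are at most \(2^n\) choices of the half-split and at most \(2^n\) choices of the spanning subsets. This overhead is absorbed into the \(2^{3n}\) factor.

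\emph{Step 2: express the probability as a measure on subspaces.} Consider the law \(\nu\) of the random span \(V_I=\Span((X_i)_{i\in I})\), restricted to the thick set \(\mathcal T_k\). This law is Borel by Lemma~\ref{lem:span-map}. The probability of the event is at most
\[
\sum_I\int_{\mathcal T_k}\prod_{j\notin I}\rho_j(V)\,d\nu_I(V).
\]
The key is to bound \(\nu_I\) by a comparison measure \(\sigma\), built from the \(Y_j\)'s, that carries a factor \(\gamma_{3.1}^{k}\) per column.

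Heuristically this works as follows. Generate the columns of \(I\) one at a time. A column \(X_i\) falls into a codimension-\(\ge k\) translate of the span of the others with probability at most \(\gamma_{3.1}^{k}\) times the corresponding \(Y_i\) probability, by Proposition~\ref{prop:RA-column}. Concretely, I would compare the joint probability that all \(n\) columns lie in \(V\) with the joint probability that the \(Y_j\)'s do. For the \(m\) columns not used to span, each factor \(\rho_j(V)\) is at most \(\gamma_{3.1}^k\sigma_j(V)\), with \(t=0\), by Proposition~\ref{prop:RA-column}. This yields
\[
\int_{A}\bar\rho(V)^m\,d\nu\leq C\gamma_{3.1}^{km}\int_A\bar\sigma(V)^m\,d\nu'
\]
for every Borel \(A\), which is where the weight \(\bar\rho^m\) appears.

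\emph{Step 3: cancel the weight and finish.} Apply Lemma~\ref{lem:weighted-cancel} to remove the common weight \(\bar\rho(V)^m\). On thick \(V\) the weight is positive, since \(e_k(V)>0\) forces \(\bar\rho(V)>0\). This gives a domination \(\nu(A)\leq C\gamma_{3.1}^{km}\sigma(A)\) with \(\sigma\) a probability measure, where \(\sigma\) is the law of a span of \(Y\)-columns. Then integrate \(\prod_{j\notin I}\rho_j\leq 1\) to get a bound \(\gamma_{3.1}^{k\lceil n/2\rceil}\leq\gamma_{3.1}^{nk/2}\). The factor \(n\) comes from averaging over \(j\), or from a union over the index \(j\) realising \(\bar\rho\).

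\emph{Main obstacle.} The hardest part is justifying the weighted comparison inequality in Step 2. It requires matching \(\bar\rho(V)^m\) against \(\bar\sigma(V)^m\), and thickness must be used to ensure that \(\bar\rho\) is not too small relative to \(\bar\sigma\). Via the Odlyzko bound \(\sigma_j\leq\lambda_{4.1}^k\) (Lemma~\ref{lem:Yj-odlyzko}), thickness gives \(\bar\rho\geq\tau^{k}\)-type lower bounds. These are needed so that the exchange of mixtures only costs exponential factors in \(n\), namely the \(2^{3n}\), and not in \(nk\).
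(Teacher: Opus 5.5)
There is a genuine gap in Steps 2--3. Your proposed inequality
\[
\int_{\mathcal A}\bar\rho(V)^m\,d\nu\le C\gamma_{3.1}^{km}\int_{\mathcal A}\bar\sigma(V)^m\,d\nu'
\]
carries no information when \(\nu'=\nu\): it is just the pointwise bound \(\bar\rho\le\gamma_{3.1}^k\bar\sigma\), integrated. Lemma~\ref{lem:weighted-cancel} also cannot be applied to it, because that lemma needs the \emph{same} weight on both sides. You cannot turn \(\bar\sigma^m\) back into \(\bar\rho^m\) without giving up the \(\gamma_{3.1}^{km}\) you just gained.

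There are further problems. Only \(k\) columns lie outside a spanning set \(I\) of size \(n-k\), not \(m\). Gaining \(\gamma_{3.1}^k\) on each of them yields at best \(\gamma_{3.1}^{k^2}\), far from \(\gamma_{3.1}^{nk/2}\). Gaining on the spanning columns one at a time is circular: such a column does not lie in the span of the others, and \(V\) depends on it, so Proposition~\ref{prop:RA-column} cannot be applied with \(V\) fixed. The half-split in your Step 1 is never used.

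The missing idea is a reverse comparison. It should bring \(\bar\sigma^m\) back to \(\bar\rho^m\), integrated against a \emph{different} measure of total mass \(2^{O(n)}\). The paper does this as follows.

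Take \(m=\lceil n/2\rceil\) iid copies \(\widetilde Y_1,\ldots,\widetilde Y_m\) of the mixture vector (choose \(J\) uniform in \([n]\), then sample \(Y_J\)). Then \(\int_{\mathcal A}\bar\sigma^m\,d\nu_0\) is the probability that the \(X\)'s span an \((n-k)\)-space in \(\mathcal A\) that contains every \(\widetilde Y_t\). This is at least \(\gamma_{3.1}^{-km}\int_{\mathcal A}\bar\rho^m\,d\nu_0\).

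Next, bound the same event from above. Choose a basis of the joint span greedily: first \(r\) of the \(\widetilde Y\)'s (indexed by \(T\)), then complete with columns \(X_i\), \(i\in I\), \(|I|=n-k-r\).
\begin{itemize}
\item The \(m-r\) redundant \(\widetilde Y\)'s lie in \(W_T\), which has codimension \(n-r\ge n/2\). By the Odlyzko bound each costs at most \(\lambda_{4.1}^{n/2}\le\bar\rho(V)\). This is where thickness enters, in the form \(\bar\rho(V)>\tau^n=\lambda_{4.1}^{n/4}\), not a \(\tau^k\)-type bound.
\item The \(k+r\) redundant \(X\)'s cost \(\prod_{j\notin I}\rho_j(V)\le\binom{n}{k+r}\bar\rho(V)^{k+r}\), by Maclaurin.
\end{itemize}
So the event has probability at most \(2^n\sum_r\int_{\mathcal A}\bar\rho^m\,d\mu_r\). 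Here \(\mu_r\) is the sum over \((T,I)\) of the sub-probability laws of the spans \(V_{I,T}\).

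Now both sides carry the weight \(\bar\rho^m\). Lemma~\ref{lem:weighted-cancel} gives \(\nu_0\le 2^n\gamma_{3.1}^{km}\sum_r\mu_r\). The count \(\sum_r\binom{m}{r}\binom{n}{n-k-r}\le n2^{2n}\) then finishes the proof. The factor \(n\) comes from the sum over \(r\), not from averaging over \(j\).
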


\begin{proof}
\medskip\noindent
We use Maclaurin's inequality for elementary symmetric means; see, for
example, Hardy--Littlewood--P\'olya \cite[Chapter~II]{HLP1952}:
\[
   e_s(a_1,\ldots,a_n)\leq \binom{n}{s}
   \left(\frac{a_1+\cdots+a_n}{n}\right)^s
\]
for nonnegative numbers \(a_1,\ldots,a_n\).  Applying this to
\(\rho_1(V),\ldots,\rho_n(V)\), if \(V\in\mathcal T_k\), then
\[
   e_k(V)\leq \binom{n}{k}\bar\rho(V)^k.
\]
Since \(V\) is thick,
\[
   \binom{n}{k}\bar\rho(V)^k
   \ge
   e_k(V)
   >
   \binom{n}{k}\tau^{nk},
\]
and therefore
\begin{equation}\label{eq:thick-implies-bar-rho}
   \bar\rho(V)>\tau^n=\lambda_{4.1}^{n/4}.
\end{equation}

\medskip\noindent
Let \(\widetilde Y\) be the mixture comparison vector obtained by choosing
\(J\) uniformly from \([n]\) and then sampling an independent copy of
\(Y_J\).  Then
\[
   \PP\{\widetilde Y\in V\}=\bar\sigma(V).
\]
Averaging Proposition~\ref{prop:RA-column} over \(j\) gives
\begin{equation}\label{eq:bar-rho-bar-sigma}
   \bar\rho(V)\leq \gamma_{3.1}^k \bar\sigma(V)
\end{equation}
for every \(V\in\mathcal S(n,n-k)\).  Averaging
\eqref{eq:Yj-odlyzko} over \(j\) also gives
\begin{equation}\label{eq:mixture-odlyzko}
   \PP\{\widetilde Y\in W\}\leq \lambda_{4.1}^{\codim W}
\end{equation}
for every affine subspace \(W\subseteq\R^n\).

\medskip\noindent
Let \(\widetilde Y_1,\ldots,\widetilde Y_m\) be iid copies of
\(\widetilde Y\), independent of \(X_1,\ldots,X_n\).  For a Borel set
\(\mathcal A\subseteq\mathcal T_k\), define
\[
   \nu_0(\mathcal A)=
   \PP\{\rank(X_1,\ldots,X_n)=n-k,\ \Span(X_1,\ldots,X_n)\in\mathcal A\}.
\]
Also set
\[
   w(V)=\bar\rho(V)^m,
   \qquad
   V\in\mathcal T_k.
\]
By \eqref{eq:thick-implies-bar-rho}, \(0<w(V)\leq1\) on \(\mathcal T_k\).

\medskip\noindent
For every Borel \(\mathcal A\subseteq\mathcal T_k\),
\begin{align}
   &\PP\{\rank(X_1,\ldots,X_n)=n-k,\ \Span(X_1,\ldots,X_n)\in\mathcal A,
        \ \widetilde Y_1,\ldots,\widetilde Y_m\in \Span(X_1,\ldots,X_n)\}
   \notag\\
   &\qquad
   =
   \int_{\mathcal A}\bar\sigma(V)^m\,d\nu_0(V)
   \ge
   \gamma_{3.1}^{-km}\int_{\mathcal A}w(V)\,d\nu_0(V),
   \label{eq:thick-lower-noniid}
\end{align}
where the inequality follows from \eqref{eq:bar-rho-bar-sigma}.

\medskip\noindent
Let \(F(\mathcal A)\) be the event that
\[
   \Span(X_1,\ldots,X_n,\widetilde Y_1,\ldots,\widetilde Y_m)\in\mathcal A
\]
and this span has dimension \(n-k\).  On \(F(\mathcal A)\), let
\(T\subseteq[m]\) be such that \((\widetilde Y_t)_{t\in T}\) is a basis of
\(\Span(\widetilde Y_1,\ldots,\widetilde Y_m)\), and then let
\(I\subseteq[n]\) be such that \((\widetilde Y_t)_{t\in T}\) together with
\((X_i)_{i\in I}\) is a basis of the full span.  If \(|T|=r\), then
\(|I|=n-k-r\), every unused comparison column lies in
\(\Span((\widetilde Y_t)_{t\in T})\), and every unused original column lies in
the final span.  Thus every outcome in \(F(\mathcal A)\) belongs to at least
one of the events indexed by the triples \((r,T,I)\) in the union bound below.

\medskip\noindent
For fixed \(r\in\{0,\ldots,r_\ast\}\), \(T\subseteq[m]\) with \(|T|=r\), and
\(I\subseteq[n]\) with \(|I|=n-k-r\), define
\[
   W_T=\Span(\widetilde Y_t:t\in T),
   \qquad
   V_{I,T}=\Span\bigl(W_T,(X_i)_{i\in I}\bigr),
\]
and let
\[
   \mu_{I,T}(\mathcal A)=
   \PP\{\dim V_{I,T}=n-k,\ V_{I,T}\in\mathcal A\}.
\]
By Lemma~\ref{lem:span-map}, this is well defined, since on the event
\(\dim V_{I,T}=n-k\) the random span \(V_{I,T}\) belongs to
\(\mathcal S(n,n-k)\).
Then, by the union bound,
\begin{align}
   \PP(F(\mathcal A))
   &\leq
   \sum_{r=0}^{r_\ast}
   \sum_{\substack{T\subseteq[m]\\ |T|=r}}
   \sum_{\substack{I\subseteq[n]\\ |I|=n-k-r}}
   \EE\Bigl[
      \1_{\{V_{I,T}\in\mathcal A,\ \dim V_{I,T}=n-k\}}
      \notag\\
   &\hspace{2.4in}\times
      \PP\{\widetilde Y\in W_T\}^{m-r}
      \prod_{j\notin I}\rho_j(V_{I,T})
   \Bigr].
   \label{eq:upper-pre-odlyzko-noniid}
\end{align}

\medskip\noindent
Now fix \(V\in\mathcal T_k\).  The factor
\(\prod_{j\notin I}\rho_j(V)\) is one term in \(e_{k+r}(V)\).  Hence
the same Maclaurin inequality gives
\begin{equation}\label{eq:ekr-bound-noniid}
   \prod_{j\notin I}\rho_j(V)
   \leq
   e_{k+r}(V)
   \leq
   \binom{n}{k+r}\bar\rho(V)^{k+r}.
\end{equation}
Also, if \(r\leq m-1\), then \(n-r\ge n/2\), so by
\eqref{eq:thick-implies-bar-rho},
\[
   \lambda_{4.1}^{n-r}\leq \lambda_{4.1}^{n/2}\leq \bar\rho(V).
\]
If \(r=m\), then \(m-r=0\), and there is no unused comparison column.  Thus in
all cases,
\[
   \PP\{\widetilde Y\in W_T\}^{m-r}
   \leq
   \bar\rho(V)^{m-r},
\]
using \eqref{eq:mixture-odlyzko}.  Combining this with
\eqref{eq:ekr-bound-noniid},
\[
   \PP\{\widetilde Y\in W_T\}^{m-r}
   \prod_{j\notin I}\rho_j(V)
   \leq
   \binom{n}{k+r}\bar\rho(V)^{m+k}
   \leq
   2^n\bar\rho(V)^m,
\]
since \(\bar\rho(V)\leq1\).

\medskip\noindent
Returning to \eqref{eq:upper-pre-odlyzko-noniid}, we obtain
\[
   \PP(F(\mathcal A))
   \leq
   2^n\sum_{r=0}^{r_\ast}
   \sum_{\substack{T\subseteq[m]\\ |T|=r}}
   \sum_{\substack{I\subseteq[n]\\ |I|=n-k-r}}
   \int_{\mathcal A} w(V)\,d\mu_{I,T}(V).
\]
Define
\[
   \mu_r=\sum_{\substack{T\subseteq[m]\\ |T|=r}}
          \sum_{\substack{I\subseteq[n]\\ |I|=n-k-r}}
          \mu_{I,T}.
\]
Then
\begin{equation}\label{eq:thick-upper-noniid}
   \PP(F(\mathcal A))
   \leq
   2^n \sum_{r=0}^{r_\ast}\int_{\mathcal A} w(V)\,d\mu_r(V).
\end{equation}

\medskip\noindent
The event in \eqref{eq:thick-lower-noniid} is contained in \(F(\mathcal A)\).
Combining \eqref{eq:thick-lower-noniid} and
\eqref{eq:thick-upper-noniid}, we get
\[
   \int_{\mathcal A} w(V)\,d\nu_0(V)
   \leq
   2^n \gamma_{3.1}^{km}
   \sum_{r=0}^{r_\ast}\int_{\mathcal A} w(V)\,d\mu_r(V).
\]
\medskip\noindent
Applying Lemma~\ref{lem:weighted-cancel} on the measurable space
\(\mathcal T_k\), with \(\nu=\nu_0\), \(\sigma=\sum_{r=0}^{r_\ast}\mu_r\), and
the weight \(w(V)=\bar\rho(V)^m\), yields
\[
   \nu_0(\mathcal A)
   \leq
   2^n \gamma_{3.1}^{km}\sum_{r=0}^{r_\ast}\mu_r(\mathcal A)
\]
for every Borel \(\mathcal A\subseteq\mathcal T_k\).

\medskip\noindent
Taking \(\mathcal A=\mathcal T_k\) and using \(\mu_{I,T}(\mathcal T_k)\leq1\),
we obtain
\begin{align*}
   \nu_0(\mathcal T_k)
   &\leq
   2^n\gamma_{3.1}^{km}
   \sum_{r=0}^{r_\ast}
   \binom{m}{r}\binom{n}{n-k-r}\\
   &\leq
   n2^{3n}\gamma_{3.1}^{km}
   \leq
   n2^{3n}\gamma_{3.1}^{nk/2},
\end{align*}
which is \eqref{eq:thick-bound-noniid}.
\end{proof}

\section{Proof of the main theorem}\label{sec:proof-main}

\medskip\noindent
The previous sections already give the desired estimate once \(k\) is large
enough.  The next proposition shows that an exponential singularity bound
handles the remaining finitely many smaller values of \(k\).

\begin{proposition}[Transfer from singularity to corank]\label{prop:transfer}
Fix \(b\in(0,1)\).  For each \(n\ge1\), let \(A=(A_{ij})\) be an
\(n\times n\) random matrix with independent real entries.  Assume that
\begin{align}
   \sup_{x\in\R}\PP\{A_{ij}=x\} &\leq b<1
   \qquad (1\leq i,j\leq n), \label{eq:abstract-atom}\\
   \PP\{\rank A<n\} &\leq C_0\exp(-c_0 n)
   \qquad\text{for all }n\ge n_0 \label{eq:singularity-input-noniid}
\end{align}
for some constants \(C_0>0\), \(c_0>0\), and \(n_0\ge1\).  Then there exists
\(c_{5.1}=c_{5.1}(b,C_0,c_0,n_0)>0\) such that for all \(n\ge1\) and all
\(1\leq k\leq n\),
\begin{equation}\label{eq:all-k-bound-noniid}
   \PP\{\rank A\leq n-k\}\leq \exp(-c_{5.1}nk).
\end{equation}
\end{proposition}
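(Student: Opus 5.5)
The plan is to split the range of \(k\) at a threshold \(k_0=k_0(b)\ge k_{3.1}\). For \(k\ge k_0\) we use the thin/thick estimates of Section~\ref{sec:comparison-thin}. For \(k<k_0\) we bootstrap from the singularity input \eqref{eq:singularity-input-noniid} applied to square submatrices.

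\textbf{Large \(k\).} By Claims~\ref{claim:thin-noniid} and \ref{claim:thick-noniid}, for each \(k\ge k_{3.1}\),
\[
\PP\{\rank A=n-k\}\leq \binom nk^2\tau^{nk}+n2^{3n}\gamma_{3.1}^{nk/2}\leq 4^n\tau^{nk}+n8^n\gamma_{3.1}^{nk/2}.
\]
Since \(\tau,\gamma_{3.1}<1\) depend only on \(b\), there is \(k_0\) such that the factors \(4^n\) and \(n8^n\) are absorbed once \(k\ge k_0\). This gives \(\PP\{\rank A=n-k\}\leq \exp(-c'nk)\) for all \(n\ge1\). Summing over the exact coranks \(k'\ge k\) yields \(\PP\{\rank A\leq n-k\}\leq \sum_{k'\ge k} e^{-c'nk'}\leq C e^{-c'nk}\). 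The constant \(C\) is absorbed by lowering \(c'\), since \(nk\ge k_0\ge1\); alternatively, enlarge \(k_0\) so that the geometric sum is at most twice its first term.

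\textbf{Small \(k\).} Fix \(1\leq k<k_0\). If \(\rank A\leq n-k\), then every \((n-k+1)\times(n-k+1)\) submatrix is singular. In particular, consider the disjoint blocks: partition \([n]\) into \(\lfloor n/d\rfloor\) intervals of length \(d\) with \(d=\lfloor n/k_0\rfloor\) (or \(d=\lfloor n/(2k)\rfloor\)). The diagonal blocks \(A_{I_s,I_s}\) are independent. The rank of a block is at most \(\rank A\), but we need more than singularity of each block, and with corank only \(k\) a block need not be singular. So instead use this: \(\rank A\leq n-k\) implies that every \(d\times d\) submatrix with \(d=n-k+1\) is singular, but these are not independent.

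The better route is to exploit the rows. Choose \(k\) disjoint row sets and column sets \(I_1,\dots,I_k\), \(J_1,\dots,J_k\), each of size \(d=\lfloor n/k\rfloor\ge n/k_0\), with \(\rank A\le n-k\). Rather than forcing each block to be singular, we use the following observation: if \(\rank A\leq n-k\), then deleting any \(k-1\) rows and \(k-1\) columns leaves an \((n-k+1)\)-square submatrix that is singular. This only gives \(\exp(-cn)\), which is fine since \(k<k_0\) is bounded: \(e^{-c_0(n-k+1)}\le C e^{-(c_0/k_0)nk}\).

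So the small-\(k\) case is simply \(\PP\{\rank A\leq n-k\}\leq \PP\{A_{[n-k+1],[n-k+1]}\text{ singular}\}\leq C_0 e^{-c_0(n-k+1)}\) whenever \(n-k+1\ge n_0\). This is at most \(C_0e^{c_0k_0}e^{-(c_0/k_0)nk}\).

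\textbf{Finite ranges and constants.} The remaining cases are finitely many sizes, \(n<n_0+k_0\). Here we use Lemma~\ref{lem:odlyzko}: \(\rank A\le n-k\) forces the first column to lie in a proper subspace spanned by the others. Conditioning on the other columns gives probability at most \(b\) (using codimension \(\ge1\)), and more crudely \(\PP\{\rank A\leq n-k\}\leq b<1\). Since \(nk\) is bounded on this range, this is \(\le e^{-cnk}\) for small \(c\).

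The main obstacle is bookkeeping the constant prefactors (\(C_0\), \(4^n\), \(n8^n\), the geometric sum) into a clean \(\exp(-cnk)\) uniformly in \(n\ge1\). We handle this by always reducing to \(\PP\le b\) or to \(\PP\le C e^{-c'nk}\) with \(nk\) large, and taking the minimum of the resulting \(c\)'s.
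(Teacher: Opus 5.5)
\textbf{Gap in the finite-range step.} Your overall structure matches the paper's. For $k\ge k_0$ you use the thin/thick bounds and sum over exact coranks. For bounded $k$ and large $n$ you use the singularity input. On the remaining finite range you use a uniform bound below $1$. That last step is wrong as written.

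It is false that $\rank A\le n-k$ forces the first column into the span of the others. The remaining columns can be dependent among themselves while $X_1\notin\Span(X_2,\dots,X_n)$, e.g.\ $X_2=X_3$. Consequently the bound $\PP\{\rank A<n\}\le b$ fails in general. For a $2\times2$ matrix with iid $\mathrm{Bernoulli}(1/2)$ entries, $b=1/2$ but $\PP\{\det A=0\}=10/16=5/8$.

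Conditioning on $X_2,\dots,X_n$ only gives
\[
\PP\{\rank A<n\}\le b+(1-b)\,\PP\{\dim\Span(X_2,\dots,X_n)<n-1\},
\]
which is not controlled without more work. What you need is a bound $q(b)<1$ uniform in $n$. The paper gets it by revealing rows one at a time. Given the first $j-1$ rows, their span has codimension at least $n-j+1$. By Lemma~\ref{lem:odlyzko}, the $j$th row lies in that span with probability at most $b^{n-j+1}$. Hence $\PP\{\rank A=n\}\ge\prod_{s\ge1}(1-b^s)>0$. With this replacement your finite-range step closes as intended, since $nk$ is bounded there.

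\textbf{Small-$k$ step.} Passing to the submatrix $A_{[n-k+1],[n-k+1]}$ is an unnecessary detour. Since $k\ge1$, the event $\rank A\le n-k$ already implies $\rank A<n$. So \eqref{eq:singularity-input-noniid} applied to $A$ itself gives $\PP\{\rank A\le n-k\}\le C_0e^{-c_0n}\le C_0e^{-(c_0/k_0)nk}$, which is what the paper does.

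Your version also has two drawbacks. First, it needs the singularity hypothesis for submatrices of $A$, whereas the proposition assumes it only for the given matrices of each size. Second, absorbing the prefactor $C_0e^{c_0k_0}$ forces the ``finite'' range to be $n<N$ with $N$ depending on $C_0,c_0,k_0$, not merely $n<n_0+k_0$. You implicitly acknowledge this in your last paragraph, but the range should be stated correctly.
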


\begin{proof}
Fix \(n\ge1\), and let \(A=(A_{ij})_{1\leq i,j\leq n}\) be the corresponding
matrix from the hypotheses.  Apply the results of
Sections~\ref{sec:bbrac} and \ref{sec:comparison-thin} to \(A\).  Combining
Claims~\ref{claim:thin-noniid} and~\ref{claim:thick-noniid}, for all
\(k\ge k_{3.1}\),
\begin{equation}\label{eq:exact-rank-bound-pre-noniid}
   \PP\{\rank A=n-k\}
   \leq
   \binom{n}{k}^2 \tau^{nk}
   +
   n2^{3n}\gamma_{3.1}^{nk/2}.
\end{equation}
Since \(\binom{n}{k}\leq 2^n\), \(\tau<1\), and \(\gamma_{3.1}<1\), there
exist \(K_{5.1}=K_{5.1}(b)\ge k_{3.1}\) and \(c'_{5.1}=c'_{5.1}(b)>0\) such
that for all \(k\ge K_{5.1}\) and all \(n\ge2\),
\[
   \binom{n}{k}^2 \tau^{nk}+n2^{3n}\gamma_{3.1}^{nk/2}
   \leq
   \exp(-c'_{5.1}nk).
\]
Summing \eqref{eq:exact-rank-bound-pre-noniid} over coranks
\(k,k+1,\ldots,n\) and decreasing \(c'_{5.1}\) if necessary gives
\begin{equation}\label{eq:large-k-final-noniid}
   \PP\{\rank A\leq n-k\}\leq \exp(-c'_{5.1}nk)
\end{equation}
for all \(n\ge2\) and all \(k\ge K_{5.1}\).

\medskip\noindent
We now treat the bounded range \(1\leq k<K_{5.1}\).  Set
\[
   N_{5.1}
   :=
   \max\!\left\{
      n_0,
      \left\lceil \frac{2\log \max\{C_0,1\}}{c_0}\right\rceil
   \right\}.
\]
Then \eqref{eq:singularity-input-noniid} implies that for every
\(n\ge N_{5.1}\),
\[
   \PP\{\rank A<n\}\leq \exp(-c_0 n/2).
\]
Hence for every \(n\ge N_{5.1}\) and every \(1\leq k<K_{5.1}\),
\[
   \PP\{\rank A\leq n-k\}
   \leq
   \exp(-c_0 n/2)
   \leq
   \exp\!\left(-\frac{c_0}{2K_{5.1}}nk\right).
\]

\medskip\noindent
It remains to treat the finitely many pairs \((n,k)\) with \(n<N_{5.1}\) and
\(1\leq k<K_{5.1}\).  Since \(Q(A_{ij})\leq b\), every atom of every
entry has mass at most \(b\).  Applying Lemma~\ref{lem:odlyzko} row by row
gives
\[
   \PP\{\rank A=n\}\ge \prod_{s=1}^n (1-b^s)\ge \prod_{s=1}^\infty (1-b^s)>0.
\]
Indeed, after conditioning on the first \(j-1\) rows, their span has
dimension at most \(j-1\), so the probability that the \(j\)th row lies in
that span is at most \(b^{n-j+1}\).  Therefore
\[
   q_{5.1}(b):=1-\prod_{s=1}^\infty (1-b^s)<1
\]
and
\[
   \PP\{\rank A<n\}\le q_{5.1}(b)
\]
for every matrix satisfying \eqref{eq:abstract-atom}.  Since
\(n<N_{5.1}\) and \(k<K_{5.1}\) in the remaining range, after decreasing
\[
   c_{5.1}=c_{5.1}(b,C_0,c_0,n_0)>0
\]
if necessary we obtain
\[
   q_{5.1}(b)\leq \exp(-c_{5.1}nk)
\]
for all such pairs \((n,k)\).

\medskip\noindent
Finally, decrease \(c_{5.1}\) so that
\[
   c_{5.1}\leq c'_{5.1}
   \qquad\text{and}\qquad
   c_{5.1}\leq \frac{c_0}{2K_{5.1}}.
\]
Then the large-\(k\) bound \eqref{eq:large-k-final-noniid}, the singularity
estimate for \(n\ge N_{5.1}\) and \(1\leq k<K_{5.1}\), and the finite-\(n\)
bound above combine to prove \eqref{eq:all-k-bound-noniid}.
\end{proof}

\medskip\noindent
Theorem~\ref{thm:main-atom} now follows by combining
Proposition~\ref{prop:transfer} with the determinant anticoncentration
statement recalled in the introduction.

\begin{proof}[Proof of Theorem~\ref{thm:main-atom}]
For every \(i,j\) and every \(x\in\R\),
\[
   \PP\{A_{ij}=x\}\leq b.
\]
By the determinant analogue of \cite[Theorem~1.2]{HKS2025perm}, explicitly
noted in the abstract and immediately after Theorem~1.1 there, there is
\(c'=c'(b)>0\) such that
\[
   \PP\{\rank A<n\}\leq \exp(-c'n)
   \qquad\text{for all }n\ge1.
\]
Apply Proposition~\ref{prop:transfer} with \(C_0=1\),
\(c_0=c'\), and \(n_0=1\).  The resulting constant is
\(c_{1.1}=c_{1.1}(b)>0\), and the conclusion is exactly
\eqref{eq:atom-large-corank}.
\end{proof}

\begin{remark}
It remains of interest to extend these results to symmetric random matrices
under comparable hypotheses, in the spirit of recent work of Han
\cite{Han2025} on large-deviation bounds for symmetric subgaussian
matrices.
\end{remark}

\bibliographystyle{alpha}
\bibliography{bibliography}

\end{document}